\newcommand{\Rd}{\mathbb{R}^d}
\newcommand{\R}{\mathbb{R}}
\newcommand{\E}[1]{\mathbb{E} \left[ #1 \right]}
\newcommand{\Exp}[2]{\mathop{\mathbb{E}}_{#1} \left[ #2 \right]}
\newcommand{\sqnorm}[1]{\left\lVert#1\right\rVert_2^2}
\newcommand{\norm}[1]{\left\lVert#1\right\rVert_2}
\newcommand{\xstar}{x^{*}}
\newcommand{\eps}{\varepsilon}
\newcommand{\pkik}{p_k^{i_k}}
\newcommand{\suchthat}{\;\ifnum\currentgrouptype=16 \middle\fi|\;}
\newcommand{\pluseq}{\mathrel{+}=}
\newcommand{\minuseq}{\mathrel{-}=}
\DeclarePairedDelimiterX{\inp}[2]{\langle}{\rangle}{#1, #2}
\DeclareMathOperator*{\argmin}{arg\,min}
\DeclarePairedDelimiter\floor{\lfloor}{\rfloor}
\DeclarePairedDelimiter\abs{\lvert}{\rvert}%
\newtheorem{theorem}{Theorem}
\newtheorem{assumption}{Assumption}
\newtheorem{lemma}{Lemma}
\newtheorem{proposition}{Proposition}
\newtheorem{corollary}{Corollary}
\icmltitlerunning{Stochastic Reweighted Gradient Descent}
\begin{document}

\twocolumn[
\icmltitle{Stochastic Reweighted Gradient Descent}



\icmlsetsymbol{equal}{*}

\begin{icmlauthorlist}
\icmlauthor{Ayoub El Hanchi}{to}
\icmlauthor{David A. Stephens}{to}
\end{icmlauthorlist}

\icmlaffiliation{to}{Department of Mathematics and Statistics, 
McGill University, Montreal, Canada}

\icmlcorrespondingauthor{Ayoub El Hanchi}{ayoub.elhanchi@mail.mcgill.ca}

\icmlkeywords{Machine Learning, ICML}

\vskip 0.3in
]


\printAffiliationsAndNotice{}  

\begin{abstract}
    Despite the strong theoretical guarantees that variance-reduced finite-sum 
    optimization algorithms enjoy, their applicability remains limited to cases 
    where the memory overhead they introduce (SAG/SAGA), or the periodic full 
    gradient computation they require (SVRG/SARAH) are manageable.
    A promising approach to achieving variance reduction while avoiding these 
    drawbacks is the use of importance sampling instead of control variates. 
    While many such methods have been proposed in the literature,
    directly proving that they improve the convergence of the resulting optimization 
    algorithm has remained elusive.
    In this work, we propose an importance-sampling-based algorithm we call SRG
    (stochastic reweighted gradient).
    We analyze the convergence of SRG in the strongly-convex case and show that, while
    it does not recover the linear rate of control variates methods, it provably
    outperforms SGD.
    We pay particular attention to the time and memory overhead of our proposed method,
    and design a specialized red-black tree allowing its efficient
    implementation. Finally, we present empirical results to support our findings.
\end{abstract}

\section{Introduction}
\label{introduction}
We are interested in the unconstrained minimization problem:
\begin{equation}
    \min_{x \in \Rd} F(x) \coloneqq \Exp{\xi \sim P}{f(x, \xi)}
\label{stoch_problem}
\end{equation}
where $\xi$ is a real random vector with distribution $P$.
In machine learning, $x$ represents the parameters of the prediction rule,
$\xi$ represents the pair of input/output variables, 
$f$ the loss function, and $F$ the risk.

In particular, we focus on the sample average approximation to
(\ref{stoch_problem}):
\begin{equation}
    \min_{x \in \Rd} F_n(x) \coloneqq \frac{1}{n} \sum_{i=1}^{n} f(x, \xi_i)
\label{sample_avg_approx}
\end{equation}
where $(\xi_i)_{i=1}^{n}$ are i.i.d. samples from $P$.
This is also known as empirical risk minimization in machine learning.

Defining $f_i(x) \coloneqq f(x, \xi_i)$ for $i \in [n]$, and letting
$i$ be a uniformly distributed random variable on $[n]$, we can rewrite
(\ref{sample_avg_approx}) as:
\begin{equation}
    \min_{x \in \Rd} F(x) \coloneqq \frac{1}{n} \sum_{i=1}^{n} f_i(x)
    = \E{f_i(x)} 
    \label{problem}
\end{equation}
where the expectation is taken with respect to $i$.

We make the following assumptions on the function $F$ and the component
functions $f_i$:
\begin{assumption}
    The function $F: \Rd \to \R$ is differentiable and
    $\mu$-strongly convex, that is $\forall x,y \in \Rd$:
    \begin{equation*}
        F(y) \geq  F(x) + \inp{\nabla F(x)}{y - x} + \frac{\mu}{2}\sqnorm{y - x} 
    \end{equation*}
    \label{strongly-convex}
\end{assumption}
\begin{assumption}
    For all $i \in [n]$, the functions $f_i: \Rd \to \R$ are differentiable
    and convex, that is, $\forall x,y \in \Rd$:
    \begin{equation*}
        f_i(y) \geq f_i(x) + \inp{\nabla f_i(x)}{y - x}
    \end{equation*}
    \label{convex}
\end{assumption}
\begin{assumption}
    For all $i \in [n]$, the functions $f_i: \Rd \to \R$ are $L$-smooth,
    that is, $\forall x,y \in \Rd$:
    \begin{equation*}
        \norm{\nabla f_i(y) - \nabla f_i(x)} \leq L \norm{y - x}
    \end{equation*}
    \label{smooth}
\end{assumption}
Note that by Assumption \ref{smooth} and the triangle inequality, 
$F$ is also $L$-smooth, and we define its condition number by $\kappa = L/\mu$.

Let $\xstar$ be the unique minimizer of $F$.
Our goal is to design the fastest algorithm that outputs a point $x \in \Rd$
such that $\sqnorm{x - \xstar} < \eps$ for a given accuracy $\eps > 0$. 
Following \cite{agarwal_lower_2015}, we assume that we have access
to a gradient oracle that takes as input a point $x \in \Rd$ and an index $i \in [n]$
and returns $\nabla f_i(x)$. We then measure the complexity of an algorithm
by counting the number of oracle calls needed to achieve
a desired accuracy. 

The most straightforward way to solve (\ref{problem}) is to
ignore the particular structure of $F$, and simply run gradient descent
or accelerated gradient-descent \cite{nesterov_method_1983} on $F$.
Under our assumptions and definition of complexity,
gradient descent converges at the rate of $O(n\kappa\log{(1/\eps)})$,
while accelerated gradient descent converges at the rate of
$O(n\sqrt{\kappa}\log{(1/\eps)})$ \cite{nesterov_lectures_2018}. 
When $n$ is large, this becomes prohibitively expensive.

One solution to this is to view problem (\ref{problem}) in its expectation form,
and use the stochastic approximation of gradient descent.
This yields stochastic gradient descent (SGD)
which estimates $\nabla F(x)$ by $\nabla f_i(x)$ for $i$ 
uniformly distributed on $[n]$ in the gradient descent update
\cite{robbins_stochastic_1951,nemirovsky_problem_1983,nemirovski_robust_2009,
moulines_non-asymptotic_2011}. The complexity of SGD under our assumptions
is known to have two regimes. Denote by $\sigma^2$ the variance of the gradient estimator 
of SGD at the minimizer $\xstar$. For $\eps > \sigma^2/L\mu$, SGD converges
at the fast rate $O(\kappa \log{(1/\eps)})$, while for $\eps \leq \sigma^2/L\mu$,
SGD suffers from the sublinear rate $O(\sigma^2/\mu^2\varepsilon)$
\cite{needell_stochastic_2014,nguyen_sgd_2018,gower_sgd_2019}.
When $\sigma^2$ is large, SGD takes a long time
to converge to even moderately accurate solutions.

While SGD does take into account the expectation form of the objective
(\ref{problem}), it fails to adapt to the fact that the expectation is taken over
a discrete distribution which gives rise to a finite-sum structure.
The last decade has seen the development of many new methods that leverage
this additional structure, including (but not limited to) 
SAG \cite{roux_stochastic_2012,schmidt_minimizing_2017},
SAGA \cite{defazio_saga_2014},
SVRG \cite{johnson_accelerating_2013},
and SARAH \cite{nguyen_sarah_2017}. All of these methods
converge at the fast linear rate of $O((n + \kappa)\log{(1/\eps)})$.
Further work in this direction led to the development of lower-bounds 
for the optimization of finite-sum functions under our assumptions 
and the oracle model of complexity
\cite{agarwal_lower_2015,woodworth_tight_2016,arjevani_limitations_2017,lan_optimal_2018},
and, similar to the deterministic case,
accelerated methods have been developed that closely match them
\cite{lin_catalyst_2018,allen-zhu_katyusha_2018,lan_unified_2019,song_variance_2020}.
The core innovation behind this set of algorithms is the design of more efficient 
gradient estimators using carefully designed control variates, earning them the
name of variance-reduced methods.

Despite all this progress, SGD remains the algorithm
of choice in practice for some machine learning problems where it 
empirically outperforms variance-reduced methods \cite{defazio_ineffectiveness_2019}.
From an optimization perspective, there are a few explanations
to this observation.

One is the phenomenon of interpolation
\cite{ma_power_2018,vaswani_fast_2019,vaswani_painless_2019}:
if $\sigma^2 = 0$, then SGD converges linearly for any $\eps > 0$ without
any variance reduction. A second possible explanation is that
in many cases we are content with a low accuracy solution $\eps > \sigma^2/L\mu$,
in which case SGD converges linearly as well.
In either of these scenarios, SGD and variance-reduced methods have
the same iteration complexity, but variance-reduced methods require
on average three times as many gradient evaluations per iteration
(SVRG/SARAH).

While these reasons can be 
used to explain the superior performance of SGD in some settings,
there is evidence that there are scenarios where variance-reduction might be useful
even when only moderate accuracy is needed.
In particular, \cite{goyal_accurate_2018} showed that reducing the
variance of the gradient estimator by increasing the batch size
leads to optimization gains in the training 
of deep neural networks for ImageNet.
Even in such cases however, the user is faced with a new problem.
If we use variance-reduced methods from the beginning of the optimization,
we make progress three times more slowly than SGD in the initial phase,
which is often the dominating phase.
Ideally, we would like to pay the additional computational cost
for variance-reduction only when we need it, but it is not clear
how we can detect when progress becomes constrained by the variance
of the gradient estimator,
although some advances have been made in this direction, see for example
\cite{pesme_convergence-diagnostic_2020}.

A pressing question that arises from our discussion is therefore:
\textit{Can we develop a variance-reduced algorithm that uses exactly the same
number of gradient evaluations as SGD ?}
Such an algorithm would enjoy the fast rate of SGD in the initial phase
of optimization, while automatically performing variance-reduction when needed.
This question is the main motivation for this paper.

Before closing this section, we give one additional motivation for our work.
As previously mentioned, the core idea of variance-reduced methods
is the use of carefully crafted control variates to reduce the variance
of the gradient estimator. There are, however, other ways to reduce 
the variance of a Monte Carlo estimator. In particular,
one can use importance sampling.
There is a large literature on such methods.
Initial attempts focused on using a fixed distribution throughout
the optimization based on prior knowledge about the functions $f_i$
\cite{needell_stochastic_2014,zhao_stochastic_2015,csiba_importance_2018},
and were able to show slightly favorable rates compared to uniform sampling.
A more recent line of work attempts to adaptively design the distributions
\cite{schaul_prioritized_2015,loshchilov_online_2015,alain_variance_2016,bouchard_online_2016,canevet_importance_2016,stich_safe_2017,katharopoulos_not_2018,johnson_training_2018},
although the methods developed in these works mostly rely on heuristics 
and do not come with theoretical guarantees.
Finally, in an attempt to put adaptive importance sampling
methods on a firmer theoretical ground, a recent set of
papers embed the problem of designing the distributions 
in an online learning framework and propose methods
that achieve sublinear regret 
\cite{namkoong_adaptive_2017,salehi_stochastic_2017,borsos_online_2018,borsos_online_2019,el_hanchi_adaptive_2020}.
These analyses however are still not satisfactory as
they assume that the gradient norms of the functions $f_i$ are uniformly
bounded on $\Rd$ which does not hold, for example, 
when $F$ is strongly-convex.
The second question we ask here is therefore: 
\textit{Can we develop a variance-reduced
algorithm based on importance sampling and which provably outperforms SGD ?}

\textbf{Contributions}: In this paper, we give positive answers to both questions.
In particular, our contributions are as follows:
\begin{itemize}
    \item We propose stochastic reweighted gradient descent (SRG), a
    variance-reduced algorithm for the optimization of finite-sums
    based on importance sampling.
    \item As oppose to SAGA/SAG which require $O(nd)$ memory,
    SRG only requires $O(n)$ memory.
    \item Unlike SVRG/SARAH which require on average three gradient
    evaluations per iteration, and like SGD, SRG requires a 
    single gradient evaluation per iteration.
    \item We develop a specialized red-black tree that allows an efficient
    implementation of SRG, incurring an overhead of only $O(\log{n})$ operations
    per iteration compared to SGD.
    \item We show that SRG provably outperform SGD under our assumptions.
    Let $\sigma^2_{*}$ be the minimal variance achievable through importance
    sampling at the minimizer $\xstar$. Then we show that SRG has
    the same convergence as SGD, but with $\sigma^2$ replaced with
    $\sigma_*^2$, which can be up to $n$ times smaller.
\end{itemize}

\section{Algorithm}
\label{algorithm}
\begin{algorithm}[h]
    \caption{SRG}
    \begin{algorithmic}
        \STATE{\textbf{Parameters:}}
        step sizes $(\alpha_k)_{k=0}^{\infty} > 0$, 
        lower bounds $(\varepsilon_k)_{k=0}^{\infty} \in (0, \frac{1}{n}]$
        \STATE{\textbf{Initialization:}} $x_0 \in \Rd, (g_0^i)_{i=1}^{n} \in \Rd $
        \FOR{$k = 0, 1, 2, \dotsc$}
        {
            \STATE $p_k = \argmin_{p \in \Delta(\eps_k)} \frac{1}{n^2} \sum_{i=1}^{n} \frac{1}{p^i}\sqnorm{g_k^i}$
            \STATE sample $i_k \sim p_k$
            \STATE $x_{k+1} = x_k - \alpha_k \frac{1}{np_{k}^{i_k}} \nabla f_{i_k}(x_k)$
            \STATE sample $b_k \sim \text{Bernoulli}(\frac{\eps_k}{p_k^{i_k}})$
            \STATE $
            g_{k+1}^i =
            \begin{cases}
                \nabla f_i(x_k) & \text{if} \ i = i_k \ \text{and} \ b_k = 1\\
                g_{k}^i & \text{otherwise} \\
            \end{cases}
            $
        }
        \ENDFOR
    \end{algorithmic}
    \label{SRG}
\end{algorithm}

Before introducing our proposed algorithm, let us first introduce some notation.
For an iteration number $k \in \mathbb{N}$, SGD performs the following update
to minimize (\ref{problem}):
\begin{equation*}
    x_{k+1} = x_k - \alpha_k \nabla f_{i_k}(x_k)
    \label{SGD}
\end{equation*}
for a step size $\alpha_k > 0$ and a random index $i_k$ drawn uniformly from $[n]$ 
and independently at each iteration.
The idea behind importance sampling is to instead sample the index $i_k$ according
to a given distribution $p_k$ on $[n]$, and to perform the update:
\begin{equation*}
    x_{k+1} = x_k - \alpha_k \frac{1}{np_k^{i_k}} \nabla f_{i_k}(x_k)
\end{equation*}
where $p_k^{i}$ is the $i^{th}$ component of the vector $p_k \in \Delta$,
and $\Delta$ is the probability simplex in $\mathbb{R}^n$.
It is immediate to verify that the importance sampling estimator
of the gradient is unbiased as long as $p_k > 0$. The challenge
is to design a sequence $\{p_k\}_{k=0}^{\infty}$ that produces
more efficient gradient estimators than the ones produced by uniform sampling.

Our proposed method is a simple modification of the one given by
\cite{el_hanchi_adaptive_2020}, and is given in Algorithm \ref{SRG}.
The algorithm follows from the following reasoning.
The variance of the importance sampling gradient estimator is given by, up to
an additive constant:
\begin{align}
    \begin{split}
    \sigma^2(x_k, p_k) &\coloneqq \Exp{i_k \sim p_k}{\sqnorm{\frac{1}{np_k^{i_k}}\nabla f_i(x_k)}} \\
    &= \frac{1}{n^2}\sum_{i=1}^{n} \frac{1}{p_k^i} \sqnorm{\nabla f_i(x_k)}
    \end{split}
    \label{var}
\end{align}
Ideally we would like to choose $p_k \in \Delta$ so that this variance
is minimized, but this is not feasible as we do not have access
to the gradient norms $\norm{\nabla f_i(x_k)}$.
Inspired by control variates methods, and in particular SAG/SAGA, 
we instead maintain a table $(g_k^i)_{i=1}^{n}$ 
that aims to track the component gradients $(\nabla f_i(x_k))_{i=1}^{n}$. 
We then use this table to construct an approximation of the true variance:
\begin{equation}
    \tilde{\sigma}^2(x_k, p_k) \coloneqq \frac{1}{n^2} \sum_{i=1}^{n} \frac{1}{p^{i}_k} \sqnorm{g_k^i}
    \label{approx_var}
\end{equation}
Finally, we choose $p_k$ so as to minimize this quantity. Since this
is only an approximation, we perform the minimization over the restricted
simplex $\Delta(\eps_k) = \{p \in \Delta \mid p \geq \eps_k\}$ for
a given $\eps_k \in (0, 1/n]$. This enforces $p_k > 0$, while
making sure the error on the approximation of the variance is taken into
account. 
The difference between our method and the one proposed in 
\cite{el_hanchi_adaptive_2020} is in the way the table $(g_k^i)_{i=1}^{n}$
is updated: we add a bernoulli random variable to determine whether
the update occurs or not. This step is crucial for the analysis as we
will see, although in practice it might be skipped.

The goal of the next two sections will be to
\begin{enumerate*}[label=(\roman*)]
    \item show that SRG can be efficiently implemented.
    \item analyse the convergence of SRG and show that it outperforms SGD.
\end{enumerate*}


\section{Implementation}
\label{implementation}
In this section, we show how Algorithm \ref{SRG} can
be efficiently implemented, leading
to a per-iteration cost that is competitive with SGD.

First, note that while we use the table $(g_k^i)_{i=1}^{n}$
in the presentation of Algorithm \ref{SRG}, in practice 
it is enough to store the gradient norms 
$(\norm{g_k^i})_{i=1}^{n}$ only.
SRG therefore requires only $O(n)$ memory compared to SAG/SAGA
which require $O(nd)$ memory in general.

With memory issues out of the way, we turn to the main bottleneck
in SRG which is sampling from:
\begin{equation}
    p_k = \argmin_{p \in \Delta(\eps_k)} \frac{1}{n^2} \sum_{i=1}^{n} \frac{1}{p^i}
    \sqnorm{g_k^i}
    \label{srg_simplex_problem}
\end{equation}

The following lemma, taken from \cite{el_hanchi_adaptive_2020},
sheds some light on the solution:
\begin{lemma}
    Let $\{a_i\}_{i=1}^{n}$ be a non-negative set of numbers and
    assume that there is some $i \in [n]$ such that $a_i > 0$.
    Let $\eps \in [0, 1/n]$, and
    let $\pi: [n] \rightarrow [n]$ be a permutation that orders 
    $\{a_i\}_{i=1}^{N}$ in a decreasing order 
    ($a_{\pi(1)} \geq a_{\pi(2)} \geq \dots \geq a_{\pi(N)}$).
    Define for $i \in [n]$:
    \begin{equation}
        \label{lambda}
        \lambda(i) \coloneqq 
        \frac{\sum_{j=1}^{i} a_{\pi(j)}}{1-(n-i)\varepsilon}
    \end{equation}
    and:
    \begin{equation}
        \label{rho}
        \rho:= \max \left\{i \in [n] \suchthat a_{\pi(i)} \geq \eps \lambda(i)  \right\}
    \end{equation}
    Then a solution of the optimization problem:
    \begin{equation}
        p^{*} = \argmin_{p \in \Delta(\varepsilon)} \sum_{i=1}^{n} \frac{1}{p_i} a_i^2
        \label{simplex_problem}
    \end{equation}
    is given by:
    \begin{equation}
    \label{probs}
        p^{*}_i = \begin{cases}
        a_i/\lambda(\rho) &\text{if $\pi(i) \leq \rho$}\\
        \varepsilon &\text{otherwise}
        \end{cases}
    \end{equation}
    In the case $a_i = 0$ for all $i \in [n]$, any 
    $p \in \Delta(\eps)$ is a solution,
    and in the case $\eps = 1/n$, $p = 1/n$ is the unique solution.
    \label{solution}
\end{lemma}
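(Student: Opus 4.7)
The plan is to apply Karush--Kuhn--Tucker (KKT) analysis to the convex program \eqref{simplex_problem}. The objective is strictly convex on the compact set $\Delta(\eps)$ (the degenerate cases stated at the end of the lemma are handled separately by inspection: if all $a_i = 0$ the objective is identically zero, and if $\eps = 1/n$ then $\Delta(\eps)$ is the singleton $\{(1/n,\dotsc,1/n)\}$). Introducing a multiplier $\mu \in \R$ for the equality constraint $\sum_i p_i = 1$ and $\nu_i \geq 0$ for each inequality $\eps - p_i \leq 0$, stationarity of the Lagrangian gives $-a_i^2/p_i^2 + \mu - \nu_i = 0$, and complementary slackness gives $\nu_i (p_i - \eps) = 0$.

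Two regimes emerge from these conditions: if $p_i^{*} > \eps$ then $\nu_i = 0$ and $p_i^{*} = a_i/\sqrt{\mu}$; if $p_i^{*} = \eps$ then $\nu_i \geq 0$ combined with stationarity forces $a_i \leq \eps \sqrt{\mu}$. Consequently the ``free set'' $S := \{i : p_i^{*} > \eps\}$ equals $\{i : a_i > \eps \sqrt{\mu}\}$ and so consists of the indices of the largest $a_i$'s; a short swap argument rules out a minimizer that clips a large coordinate while leaving a smaller one unclipped. Writing $|S| = \rho$ and summing $\sum_i p_i^{*} = 1$ yields
\[
    \sqrt{\mu} \;=\; \frac{\sum_{j=1}^{\rho} a_{\pi(j)}}{1 - (n-\rho)\eps} \;=\; \lambda(\rho),
\]
which immediately recovers the candidate formula \eqref{probs}.

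The main remaining work is to show that the $\rho$ defined by \eqref{rho} picks out the right cutoff. I would establish two algebraic facts by directly manipulating \eqref{lambda}: first, the equivalence $a_{\pi(i+1)} \geq \eps \lambda(i+1) \iff a_{\pi(i+1)} \geq \eps \lambda(i)$, and second, that either inequality implies $\lambda(i+1) \geq \lambda(i)$. Combined with the decreasing ordering $a_{\pi(i)} \geq a_{\pi(i+1)}$, these imply that $\{i : a_{\pi(i)} \geq \eps \lambda(i)\}$ is an initial segment of $[n]$, so its maximum $\rho$ is unambiguous. For any $j > \rho$ the same equivalence then gives $a_{\pi(j)} < \eps \lambda(\rho)$, certifying $\nu_j \geq 0$ in stationarity and completing the KKT verification. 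The main obstacle is precisely this monotonicity/consistency step; once it is in hand, \eqref{probs} follows directly from the KKT characterization and the rest is bookkeeping.
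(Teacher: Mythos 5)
Your proposal is correct, and it is the standard route for this result. Note that the paper does not actually prove Lemma~\ref{solution} itself --- it imports it from \cite{el_hanchi_adaptive_2020} --- but the ``main remaining work'' you identify (that $\{i : a_{\pi(i)} \geq \eps\lambda(i)\}$ is an initial segment, via the equivalence $a_{\pi(i+1)} \geq \eps\lambda(i+1) \iff a_{\pi(i+1)} \geq \eps\lambda(i)$) is precisely the content and proof of the paper's Proposition~\ref{order}, and the consequence $a_{\pi(\rho+1)} \leq \eps\lambda(\rho)$ you use to certify $\nu_j \geq 0$ is exactly the fact the paper extracts from it in the proof of Proposition~\ref{subopt}. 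Two minor points: the objective is not strictly convex when some $a_i = 0$ (harmless, since only \emph{a} solution is claimed --- invoke sufficiency of KKT for convex programs rather than uniqueness), and you should check $\rho \geq 1$, i.e.\ $a_{\pi(1)} \geq \eps\lambda(1)$, which reduces to $n\eps \leq 1$.
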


\subsection{Naive implementation}

\begin{algorithm}[htbp]
    \caption{Naive sample}
    \begin{algorithmic}
        \STATE{\textbf{Input:} 
        $(a_{\pi(i)})_{i=1}^{n} > 0, 
        (\lambda(i))_{i=1}^{n},
        \eps > 0$
        }

        \STATE{\textbf{Initialization:}} 
        $l \gets 1, r \gets n$

        \WHILE{$l \leq r$}
            \STATE $m \gets \floor{\frac{l + r}{2}}$
            \IF{$a_{\pi(m)} \geq \eps \lambda(m)$}
                \STATE{$c \gets m$}
                \STATE{$l \gets m + 1$}
            \ELSE
                \STATE{$r \gets m - 1$}
            \ENDIF
        \ENDWHILE

        \STATE{$\rho \gets c$}
        \STATE{compute $p^{*}$ using (\ref{probs}).}
        \STATE{sample $i \sim p^{*}$.}
        \STATE{\textbf{Output:} the index $i$ and its probability $p^{*, i}$.}
    \end{algorithmic}
    \label{naive_sample}
\end{algorithm}

To simplify notation for this section we will use the one
introduced in Lemma \ref{solution}, that is,
we will refer to $a_i$ instead of $\norm{g_k^i}$.

Let us for the moment assume that we have access
to $(a_{\pi(i)})_{i=1}^{n}$, a sorted version of $(a_{i})_{i=1}^{n}$,
as well as to $(\lambda(i))_{i=1}^{n}$. We will worry
about how to maintain these in the next subsection.
How can we sample from (\ref{simplex_problem}) ?

The following proposition reveals a useful property, which
we use to construct Algorithm \ref{naive_sample}:
\begin{proposition}
    With the definitions in Lemma \ref{solution}, we have:
    \begin{align*}
        a_{\pi(i)} \geq \eps\lambda(i)
        &\Rightarrow a_{\pi(l)} \geq \eps\lambda(l) \quad \forall l \leq i \\
        a_{\pi(i)} < \eps\lambda(i)
        &\Rightarrow a_{\pi(l)} < \eps\lambda(l) \quad \forall l \geq i
    \end{align*}
    \label{order}
\end{proposition}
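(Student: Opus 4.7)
The plan is to reduce the proposition to a single one-step comparison via two observations. First, I would note that the second implication is the logical contrapositive of the first: if some index $l \geq i$ satisfied $a_{\pi(l)} \geq \eps\lambda(l)$ while $a_{\pi(i)} < \eps\lambda(i)$, applying the first implication at index $l$ (with $i$ playing the role of the smaller index) would force $a_{\pi(i)} \geq \eps\lambda(i)$, a contradiction. So it suffices to prove the first implication. Second, a downward induction on $l$ (from $l = i$ to $l = 1$) reduces the first implication to its single-step version: for every $i \geq 2$, $a_{\pi(i)} \geq \eps\lambda(i) \Rightarrow a_{\pi(i-1)} \geq \eps\lambda(i-1)$.

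The key algebraic step is to establish the identity
\begin{equation*}
    \lambda(i-1) - \lambda(i) = \frac{\eps\lambda(i) - a_{\pi(i)}}{1 - (n-i+1)\eps}.
\end{equation*}
I would derive this by clearing denominators in (\ref{lambda}), using $\sum_{j=1}^{i} a_{\pi(j)} = \sum_{j=1}^{i-1} a_{\pi(j)} + a_{\pi(i)}$ to eliminate the common partial sum, and then rearranging. Since $\eps \leq 1/n$ and $i \geq 2$, the denominator $1 - (n-i+1)\eps$ is strictly positive (it is at least $1/n$).

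With this identity in hand, the one-step claim is immediate: assuming $a_{\pi(i)} \geq \eps\lambda(i)$, the numerator above is non-positive, so $\lambda(i-1) \leq \lambda(i)$, and combining this with the sorted-order inequality $a_{\pi(i-1)} \geq a_{\pi(i)}$ gives $a_{\pi(i-1)} \geq a_{\pi(i)} \geq \eps\lambda(i) \geq \eps\lambda(i-1)$, as required. The only real content of the argument is isolating the clean algebraic identity above; once that is done, neither the contrapositive reduction nor the monotonicity chain presents any serious obstacle, so I do not anticipate any hard step.
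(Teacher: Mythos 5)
Your proof is correct and follows essentially the same route as the paper's: a one-step induction along the sorted order, driven by the algebraic relation between $\lambda(i)$ and its neighbour (your identity $\lambda(i-1)-\lambda(i) = \bigl(\eps\lambda(i)-a_{\pi(i)}\bigr)/\bigl(1-(n-i+1)\eps\bigr)$ is exactly what the paper obtains by clearing denominators in its chain of equivalences). The only cosmetic differences are that the paper proves the ``$<$'' direction directly and says the ``$\geq$'' direction is symmetric, whereas you prove the ``$\geq$'' direction and get the other by contraposition.
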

\begin{proof}
    The proof of the first and second statements are the same
    (replacing $<$ with $\geq$).
    We prove here the second statement.
    Let $i \in [n]$ such that $a_{\pi(i)} < \eps\lambda(i)$.
    By definition of $\pi$, $a_{\pi(i+1)} \leq a_{\pi(i)}$, so we have:
    \begin{align*}
        &a_{\pi(i+1)} < \eps \lambda(i) \\
        \Leftrightarrow \quad
        & a_{\pi(i+1)} (1 - (n-i)\eps) < \eps \sum_{j=1}^{i} a_{\pi(j)} \\
        \Leftrightarrow \quad
        & a_{\pi(i+1)} (1 - (n-(i+1))\eps) < \eps \sum_{j=1}^{i+1} a_{\pi(j)} \\
        \Leftrightarrow \quad
        & a_{\pi(i+1)} < \eps \lambda(i+1)
    \end{align*}
    the rest of the claim holds by induction.
\end{proof}

\begin{lemma}
    With the definitions of Lemma \ref{solution}, Algorithm 
    \ref{naive_sample} samples from the distribution
    given by (\ref{simplex_problem}).
\end{lemma}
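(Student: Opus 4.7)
The plan is to verify that Algorithm \ref{naive_sample} correctly identifies $\rho$ as defined in (\ref{rho}) via binary search, and then appeal to Lemma \ref{solution} to conclude that sampling from the resulting $p^*$ yields the desired distribution.

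The key enabler is Proposition \ref{order}: it tells us that the predicate $P(i) := [a_{\pi(i)} \geq \eps \lambda(i)]$ is monotone in $i$, in the sense that the set $\{i \in [n] : P(i) \text{ holds}\}$ is a (possibly empty) prefix of $[n]$. Consequently, $\rho$ is precisely the largest index for which $P$ holds, and standard binary search applies.

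First I would set up the loop invariant for the while loop: at every iteration, $\rho$ lies in the closed interval $[l, r]$, and $c$ stores the largest index in $\{1, \dotsc, l-1\}$ for which $P$ has already been verified to hold (with $c$ undefined if no such index has been observed yet; in that case we can argue that the loop has not yet exited below $\rho$). The invariant is established at initialization since $l = 1$, $r = n$, and $\rho \in [1, n]$ by hypothesis (at least one $a_i > 0$ implies $\rho \geq 1$, which together with the non-empty hypothesis of Lemma \ref{solution} guarantees existence). Its preservation splits into two cases based on the test $a_{\pi(m)} \geq \eps \lambda(m)$: if true, Proposition \ref{order} (first implication) shows $P$ holds for all $l' \leq m$, so $\rho \geq m$ and we can safely set $c \gets m$ and $l \gets m+1$; if false, Proposition \ref{order} (second implication) shows $P$ fails for all $l' \geq m$, so $\rho \leq m-1$ and we set $r \gets m-1$. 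Termination follows because $r - l$ strictly decreases each iteration, and at termination $l > r$ forces $\rho = c$.

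Having recovered the correct $\rho$, the algorithm computes $p^*$ via formula (\ref{probs}) and samples from it directly. By Lemma \ref{solution}, this $p^*$ is a solution of the optimization problem (\ref{simplex_problem}), so the sampled index has the claimed distribution.

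The only minor obstacle is handling the edge cases in which $c$ is never assigned (i.e., $P(1)$ fails) or in which all $a_i$ vanish; under the hypothesis of Lemma \ref{solution} that some $a_i > 0$, one checks that $a_{\pi(1)} > 0 \geq \eps \lambda(1) - a_{\pi(1)} \cdot 0$ forces $P(1)$ to hold when $\eps < 1$, and the case $\eps = 1/n$ is covered separately by the last clause of Lemma \ref{solution}. These are routine verifications, so the substantive content of the proof is the monotonicity-plus-binary-search argument sketched above.
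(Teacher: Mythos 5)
Your proposal is correct and follows essentially the same route as the paper: establish that the binary search locates $\rho$ using the monotonicity of the predicate $a_{\pi(i)} \geq \eps\lambda(i)$ from Proposition \ref{order}, then invoke Lemma \ref{solution}. One small imprecision: the loop invariant should be $\rho \in \{c\} \cup [l, r]$ rather than $\rho \in [l, r]$ (the latter is violated as soon as the midpoint $m = \rho$ passes the test and $l$ is advanced to $\rho + 1$), but your conclusion ``at termination $\rho = c$'' shows you are in fact using the correct combined invariant, which is exactly the one the paper states.
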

\begin{proof}
    Algorithm \ref{naive_sample} proceeds by first finding
    $\rho$, then uses Lemma \ref{solution} to construct and
    sample from $p^{*}$. It is therefore enough to show
    that it indeed finds $\rho$. First, note that from the proof 
    of Lemma \ref{solution} in \cite{el_hanchi_adaptive_2020}, 
    we know that:
    \begin{equation*}
        a_{\pi(1)} \geq \eps \lambda(1)
    \end{equation*}
    therefore the variable $c$ is guaranteed to
    be well defined by the end of the $while$ loop.
    We claim that the $while$ loop maintains the following invariant:
    $\rho \in \{c\} \cup [l, r]$ where $[l, r] = \{l, l+1, \dotsc, r-1, r\}$.
    Therefore, at the end of the loop, we have $c = \rho$.
    This can easily be proved using induction.
    The base case is trivial as $[l, r] = [n]$ initially, and
    $\rho \in [n]$. The induction step follows from
    Proposition \ref{order}.
\end{proof}

We now have an algorithm to sample from the distribution
(\ref{srg_simplex_problem}). Unfortunately, 
while the search for $\rho$
only takes $O(\log{n})$ operations, 
the computation of $p^{*}$ alone takes $O(n)$ time,
while naively maintaining $(\norm{g_k^i})_{i=1}^{n}$ sorted
requires $O(n\log{n})$ operations and updating
$(\lambda(i))_{i=1}^{n}$ requires another $O(n)$ operations.
For large enough $n$, this can cause significant slowdown
even when the gradient evaluations are expensive.

\subsection{Tree-based implementation}
Our goal in this subsection is to design an efficient
algorithm that allows sampling
from (\ref{srg_simplex_problem}) in $O(\log{n})$ time.
To achieve this we need to:
\begin{itemize}
    \item Efficiently maintain 
    $(\norm{g_k^i})_{i=1}^{n}$ sorted from one iteration to
    the next while allowing for fast search.
    \item Quickly
    evaluate the $(\lambda(i))_{i=1}^{n}$ when we need them.
    \item Sample from $p^{*}$ without explicitly
    forming it.
\end{itemize}

To comply with all these requirements, we make use of an augmented
red-black tree $T$. We will assume that for a node $v$, its left child
$v.left$ has a smaller key than that of $v$, while the opposite is true
for its right child $v.right$. We will refer to its parent by $v.parent$.
If $v$ has no left (or right) child, we will assume that $v.left$
(or $v.right$) takes value $nil$.

The keys of the nodes of the tree $T$ are the gradient norms
$(\norm{g_k^i})_{i=1}^{n}$, while their values are the corresponding indices.
To simplify the presentation, we will assume that the keys are unique,
although everything here still works when they are not with some small modifications.
We require that each node $v$ of the tree $T$
maintains two more attributes:
\begin{itemize}
    \item $v.size$ which counts the number of nodes in the subtree whose root is $v$.
    \item $v.sum$ which stores the sum of the keys of the nodes in the subtree
    whose root is $v$.
\end{itemize}

We also make use of four methods that can be efficiently implemented using these
additional attributes:
\begin{itemize}
    \item $T.rank(v)$ which returns the rank 
    (position in the decreasing order of the tree) of the node $v$.
    \item $T.partial\_sum(v)$ which returns the sum of all keys larger than or
    equal to the key of the given node.
\end{itemize}

Before defining the last two methods, to each node $v$ we associate 
(conceptually only)
the interval $[T.partial\_sum(v) - v.key, T.partial\_sum(v))$.
The last two methods are:
\begin{itemize}
    \item $T.select\_rank(r)$ which returns the node with the $r^{th}$ largest key
    in the tree.
    \item $T.select\_sum(s)$ which returns the node whose associated 
    interval contains $s$.
\end{itemize}

Finally, and to allow fast access to the elements $\norm{g_k^i}$ through their
indices, we store the nodes of the tree in an array whose $i^{th}$ element
is the node whose value is $i$.
See Chapter 14 of \cite{cormen_introduction_2009} for a detailed description
of how to implement such a tree so that all the methods we require as well
as the maintenance of the tree run in $O(\log(n))$ operations.

With this notation in place, we can now state our $O(\log{n})$
method to sample from (\ref{srg_simplex_problem}) in Algorithm
\ref{sample}, which uses Algorithm \ref{solve} as a subroutine.

\begin{algorithm}[htbp]
    \caption{Solve}
    \begin{algorithmic}
        \STATE{\textbf{Input:} 
        tree $T$, lower bound $\eps > 0$
        }

        \STATE{$v \gets T.root$}
        \STATE{$r \gets v.right.size + 1$}
        \STATE{$s \gets v.right.sum + v.key$}
        \STATE{$c \gets 1 - (n - r) \eps$}

        \WHILE{$v \neq nil$}
            \IF{$v.key \geq \eps s / c$}
                \STATE{$w \gets v$}
                \STATE{$v \gets v.left$}
                \IF{$v \neq nil$}
                    \STATE $r \pluseq v.right.size + 1$
                    \STATE $s \pluseq v.right.sum + v.key$
                    \STATE $c = 1 - (n - r) \eps$
                \ENDIF
            \ELSE
                \STATE{$v \gets v.right$}
                \IF{$v \neq nil$}
                    \STATE $r \minuseq v.left.size + 1$
                    \STATE $s \minuseq v.left.sum + v.parent.key$
                    \STATE $c = 1 - (n - r) \eps$
                \ENDIF
            \ENDIF
        \ENDWHILE

        \STATE{\textbf{Output:} node $w$ whose rank is $\rho$}
    \end{algorithmic}
    \label{solve}
\end{algorithm}

\begin{algorithm}[htbp]
    \caption{Sample}
    \begin{algorithmic}
        \STATE{\textbf{Input:} 
        tree $T$, lower bound $\eps > 0$
        }

        \STATE{$w \gets Solve(T, \eps)$} \hfill \COMMENT{Using Algorithm \ref{solve}}
        \STATE{$\rho \gets T.rank(w)$}
        \STATE{$\lambda \gets T.partial\_sum(w)/(1 - (n-\rho)\eps)$}

        \STATE{sample $u$ uniformly from $[0, 1)$}
        \IF{$u < 1 - (n - \rho) \eps$}
            \STATE{$s \gets \lambda u$}
            \STATE{$v \gets T.select\_sum(s)$}
            \STATE{$i \gets v.value$}
            \STATE{$p^i \gets v.key / \lambda$}
        \ELSE
            \STATE{$u \minuseq 1 - (n - \rho) \eps$}
            \STATE{$r \gets n - \floor{u/\eps}$}
            \STATE{$v \gets T.select\_rank(r)$}
            \STATE{$i \gets v.value$}
            \STATE{$p^i \gets \eps$}
        \ENDIF

        \STATE{\textbf{Output:} the index $i$ and its probability $p^i$}
    \end{algorithmic}
    \label{sample}
\end{algorithm}

Similar to Algorithm \ref{naive_sample}, Algorithm \ref{sample}
first finds $\rho$, then uses Lemma \ref{solution} to sample
from (\ref{srg_simplex_problem}). 

To find $\rho$, Algorithm \ref{sample}
uses Algorithm \ref{solve}, which is only superficially different
from the $while$ loop in Algorithm \ref{naive_sample}. In particular,
it is not hard to show that the variable $r$ holds the rank
of the node $v$ (which corresponds to $m$ in Algorithm \ref{naive_sample}),
and the variable $s$ and $c$ satisfy $\lambda(r) = s/c$
(which corresponds to $\lambda(m)$ in Algorithm \ref{naive_sample}).
The correctness of Algorithm \ref{solve} therefore follows from that
of Algorithm \ref{naive_sample} which we proved in the last subsection.

Once $\rho$ is found, Algorithm \ref{sample} computes $\lambda = \lambda(p)$,
and samples the index $i$ from (\ref{srg_simplex_problem}) using
inverse transform sampling.
In particular, it partitions the unit interval into
$[0, 1 - (n-\rho)\eps)$ which is reserved to the $\rho$ largest
elements, and $[1 - (n-\rho)\eps, 1)$ which is reserved to the remaining
$(n - \rho)$ elements all of which have probability $\eps$. Using
the methods $T.select\_sum(s)$ and $T.select\_rank(r)$, it then picks
the right index and lazily evaluates the required probability.
Note that all the methods we use run in $O(\log{n})$ time, and therefore
the total complexity of sampling from (\ref{srg_simplex_problem})
and maintaining the tree $T$ is $O(\log{n})$.

\section{Convergence analysis}
\label{convergence}
In this section, we analyze the convergence of SRG,
and show that it outperforms SGD under our assumptions.
Two key constants are helpful in contrasting the convergence
of SRG and SGD.
Recall the definition of $\sigma^{2}(x_k, p_k)$ in (\ref{var}), and define:
\begin{align*}
    \sigma^2 &\coloneqq \sigma^{2}(\xstar, 1/n) = 
    \frac{1}{n} \sum_{i=1}^{n}\sqnorm{\nabla f_i(\xstar)} \\
    \sigma^2_{*} &\coloneqq \min_{p \in \Delta} \sigma^{2}(\xstar, p)
    = \frac{1}{n^2} \left(\sum_{i=1}^{n} \norm{\nabla f_i(\xstar)}\right)^2
\end{align*} 
It is well known that the convergence of SGD depends on $\sigma^2$
\cite{gower_sgd_2019}.
We here show that SRG is able to reduce this dependence to $\sigma_*^2$,
which can be up to $n$ times smaller than $\sigma^2$.

\subsection{Bounding the suboptimality  of $p_k$}
We start our analysis with the following Lemma, which is
a slightly improved version of Lemma 6 in \cite{borsos_online_2018}.
\begin{proposition}
    Let \(\{a_i\}_{i=1}^{n}\) be a non-negative 
    set of numbers and suppose that there exists an $i \in [n]$
    such that $a_i > 0$. Then:
    \begin{align*}
        \min_{p \in \Delta(\varepsilon)} \sum_{i=1}^{n} \frac{1}{p^i} a_i^2
        &\leq (1 + 2n\varepsilon) \min_{p \in \Delta}
        \sum_{i=1}^{n} \frac{1}{p^i} a_i^2 \\
        &= (1+2n\varepsilon) \left(\sum_{i=1}^{n}a_i\right)^2
    \end{align*}
    for all \(0 \leq \varepsilon \leq \frac{1}{2n}\).
    \label{subopt}
\end{proposition}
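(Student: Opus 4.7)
The plan is to prove the equality first and then the inequality. For the equality on the right-hand side, the unconstrained problem $\min_{p \in \Delta} \sum_i a_i^2/p^i$ can be handled by Cauchy--Schwarz: writing $\sum_i a_i = \sum_i (a_i/\sqrt{p^i})\sqrt{p^i}$ gives $(\sum_i a_i)^2 \leq (\sum_i a_i^2/p^i)(\sum_i p^i) = \sum_i a_i^2/p^i$, with equality when $p^i \propto a_i$ (which is a valid element of $\Delta$ since at least one $a_i > 0$). This identifies the minimum value as $(\sum_i a_i)^2$.

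For the inequality, the natural idea is to produce a feasible element of $\Delta(\varepsilon)$ by perturbing the unconstrained optimizer $p^*$. I would set
\[
    \tilde p^i \coloneqq (1 - n\varepsilon)\, p^{*,i} + \varepsilon,
\]
which lies in $\Delta(\varepsilon)$ since its entries are at least $\varepsilon$ and sum to $(1-n\varepsilon)+n\varepsilon=1$. Using $\tilde p^i \geq (1-n\varepsilon)\, p^{*,i}$ (valid because $\varepsilon \geq 0$) yields
\[
    \min_{p \in \Delta(\varepsilon)} \sum_{i=1}^n \frac{a_i^2}{p^i}
    \leq \sum_{i=1}^n \frac{a_i^2}{\tilde p^i}
    \leq \frac{1}{1-n\varepsilon} \sum_{i=1}^n \frac{a_i^2}{p^{*,i}}
    = \frac{1}{1-n\varepsilon} \left(\sum_{i=1}^n a_i\right)^2.
\]

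It then remains to verify the clean multiplicative bound $\frac{1}{1-n\varepsilon} \leq 1 + 2n\varepsilon$, which is equivalent to $n\varepsilon(1 - 2n\varepsilon) \geq 0$ after cross-multiplying; this is exactly where the hypothesis $\varepsilon \leq \frac{1}{2n}$ is used, and it also ensures $1-n\varepsilon > 0$ so that the intermediate step is not vacuous. I do not anticipate a real obstacle here: the only subtlety is choosing the convex combination with coefficient $(1-n\varepsilon)$ rather than, say, a naive truncation of $p^*$, so that the resulting bound degrades only by the factor $1/(1-n\varepsilon)$, which is tight enough to give the advertised $1 + 2n\varepsilon$ constant in the permitted range of $\varepsilon$.
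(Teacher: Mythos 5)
Your proof is correct, and it takes a genuinely different route from the paper's. The paper proves the inequality by invoking Lemma \ref{solution}: it writes the constrained minimum explicitly as $\lambda(\rho)\sum_{i=1}^{\rho}a_{\pi(i)}+\sum_{i=\rho+1}^{n}a_{\pi(i)}^2/\varepsilon$, uses the threshold inequality $a_{\pi(\rho+1)}\le\varepsilon\lambda(\rho)$ (obtained from the reverse implications in the proof of Proposition \ref{order} at $i=\rho$) to collapse this to $\lambda(\rho)\sum_i a_i \le \frac{1}{1-(n-\rho)\varepsilon}\left(\sum_i a_i\right)^2$, and only then relaxes to $\frac{1}{1-n\varepsilon}$ and applies the same elementary bound $\frac{1}{1-x}\le 1+2x$ on $[0,1/2]$. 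You never touch the constrained optimizer: you exhibit the feasible point $\tilde p=(1-n\varepsilon)p^*+\varepsilon\mathbf{1}\in\Delta(\varepsilon)$ and bound the objective there, which immediately gives the same $\frac{1}{1-n\varepsilon}\left(\sum_i a_i\right)^2$. Your argument is more elementary and self-contained (it does not need Lemma \ref{solution} at all), while the paper's reuses machinery it must develop anyway for the sampling implementation and, before relaxation, yields the marginally sharper constant $1/(1-(n-\rho)\varepsilon)$ --- sharpness it then discards. One small bookkeeping point: for indices with $a_i=0$ the unconstrained optimizer has $p^{*,i}=0$, so the intermediate sum $\sum_i a_i^2/p^{*,i}$ should be read with the convention $0/0=0$ (equivalently, restrict all sums to $\{i : a_i>0\}$); this does not affect the argument. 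It is also worth observing that your proof shows the uniform mixture of $p^*$ already achieves the $(1+2n\varepsilon)$ guarantee, which puts the paper's remark after the proposition --- that mixing the unconstrained solution with the uniform distribution ``does not give us any suboptimality guarantees similar to Proposition \ref{subopt}'' --- in a somewhat different light: the bound itself survives under mixing, and the real reason for solving the constrained problem must lie elsewhere in the analysis.
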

\begin{proof}
    By Lemma \ref{solution} we have:
    \begin{align*}
        \min_{p \in \Delta(\varepsilon)} \sum_{i=1}^{n} \frac{1}{p^i} a_i^2
        &= \lambda(\rho) \sum_{i=1}^{\rho}a_{\pi(i)} + 
        \sum_{i=\rho+1}^{n} \frac{a_{\pi(i)}^2}{\eps} \\
        &\leq \lambda(\rho) \sum_{i=1}^{\rho}a_{\pi(i)} + 
        a_{\pi(\rho + 1)} \sum_{i=\rho+1}^{n} \frac{a_{\pi(i)}}{\eps}
    \end{align*}
    Where the second line follows from the definition of $\pi$.
    By the reverse chain of implications in the proof of Proposition
    \ref{order} and taking $i=\rho$ we obtain 
    $a_{\pi(\rho + 1)} \leq \eps \lambda(\rho)$. Replacing we get:
    \begin{align*}
        \min_{p \in \Delta(\varepsilon)} \sum_{i=1}^{n} \frac{1}{p^i} a_i^2
        &\leq
        \lambda(\rho) \sum_{i=1}^{n}a_i \\
        &\leq
        \frac{1}{1-(n-\rho)\eps} \left(\sum_{i=1}^{n}a_i\right)^2 \\
        &\leq
        \frac{1}{1 - n\eps} \left(\sum_{i=1}^{n}a_i\right)^2 \\
    \end{align*}
    The result then follows from the inequality
    $\frac{1}{1-x} \leq 1 + 2x$ valid for $x \in [0, 1/2]$.
\end{proof}
Proposition \ref{subopt} is the main motivation for our choice of $p_k$
in Algorithm \ref{SRG}. If our goal was only to ensure
that $p_k > 0$, we could have first performed the optimization over
the entire simplex $\Delta$, which has a simple solution 
$p^i \propto \norm{g_k^i}$,
and then mixed this distribution
with the uniform distribution over $[n]$. This approach
however does not give us any suboptimality guarantees similar to Proposition
\ref{subopt}, which is crucial for the analysis as will soon become clear.

\subsection{Useful lemmas}
\label{useful_lemmas}
Before proceeding with the main results, let us first state a few useful lemmas.
We refer the reader to the Appendix for their proofs.
The first lemma studies the evolution of $(g_k^i)_{i=1}^{n}$.

\begin{lemma}
    Let $k \in \mathbb{N}$. Suppose $(g_k^i)_{i=1}^{n}$ evolves
    as in Algorithm \ref{SRG}, and assume that Assumptions 
    \ref{strongly-convex}, \ref{convex}, and \ref{smooth} hold.
    Taking expectation with respect to 
    $(i_k, b_k)$, conditional on $(i_t, b_t)_{t=0}^{k-1}$, we have:
    \begin{align*}
        &\E{\sum_{i=1}^{n} \sqnorm{g_{k+1}^i - \nabla f_i(x^*)}} \leq \\
        &\left(1 - \varepsilon_k\right) 
        \sum_{i=1}^{n} \sqnorm{g_k^i - \nabla f_i(x^*)}
        +
        2Ln\varepsilon_k \left[F(x_k) - F(x^*)\right]
    \end{align*}
    \label{tracking}
\end{lemma}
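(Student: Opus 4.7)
The plan is to exploit the fact that the Bernoulli step in SRG is specifically engineered to make the marginal probability that any particular $g_{k+1}^i$ is refreshed equal to $\eps_k$, independent of the index $i$ and of the specific distribution $p_k$. Concretely, conditional on $(i_t, b_t)_{t=0}^{k-1}$, the event $\{i_k = i, b_k = 1\}$ has probability $p_k^i \cdot (\eps_k / p_k^i) = \eps_k$. This is precisely why the author introduced $b_k$: without it, the refresh probability would be $p_k^i$ and would depend on quantities we cannot control pointwise.

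The first step is therefore to write, for each fixed $i \in [n]$,
\begin{equation*}
    \E{\sqnorm{g_{k+1}^i - \nabla f_i(\xstar)}}
    = \eps_k \sqnorm{\nabla f_i(x_k) - \nabla f_i(\xstar)}
    + (1 - \eps_k) \sqnorm{g_k^i - \nabla f_i(\xstar)},
\end{equation*}
where the expectation is taken only over $(i_k, b_k)$. Because expectation is linear, we can then sum over $i$ without worrying that the refresh events across different $i$'s are not independent; only the marginals matter here.

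The second step is to bound $\sum_{i=1}^n \sqnorm{\nabla f_i(x_k) - \nabla f_i(\xstar)}$ by $2Ln [F(x_k) - F(\xstar)]$. This is a textbook consequence of $L$-smoothness together with convexity of each $f_i$: by the standard co-coercivity/smoothness inequality applied to the convex $L$-smooth function $f_i$,
\begin{equation*}
    \frac{1}{2L}\sqnorm{\nabla f_i(x_k) - \nabla f_i(\xstar)}
    \leq f_i(x_k) - f_i(\xstar) - \inp{\nabla f_i(\xstar)}{x_k - \xstar}.
\end{equation*}
Summing over $i$, dividing by $n$, and using $\nabla F(\xstar) = \frac{1}{n}\sum_i \nabla f_i(\xstar) = 0$ makes the inner-product term vanish and yields the stated bound. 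Plugging it into the previous display and collecting terms gives the lemma.

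I do not expect any serious obstacle: the Bernoulli design makes the first step essentially an identity rather than an inequality, and the second step is a canonical application of smoothness-plus-convexity. The only small subtlety to flag in a careful write-up is that the Bernoulli step's conditional probability $\eps_k/p_k^{i_k}$ is well-defined precisely because $p_k \in \Delta(\eps_k)$ guarantees $p_k^{i_k} \geq \eps_k$, so the marginal refresh probability is exactly (and not merely at most) $\eps_k$.
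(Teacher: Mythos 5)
Your proposal is correct and takes essentially the same route as the paper's proof: both hinge on the observation that the marginal refresh probability of each index is exactly $\eps_k$ (yielding the identity $\E{\sum_i \sqnorm{g_{k+1}^i - \nabla f_i(\xstar)}} = (1-\eps_k)\sum_i \sqnorm{g_k^i - \nabla f_i(\xstar)} + \eps_k \sum_i \sqnorm{\nabla f_i(x_k) - \nabla f_i(\xstar)}$), followed by the standard convexity-plus-smoothness bound on the second sum. The only cosmetic difference is that you compute per-index marginals and then sum, whereas the paper conditions on $(i_k, b_k)$ for the whole sum at once and simplifies.
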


The second lemma is a bound on the second moment of the gradient estimator
used by SRG. 
\begin{lemma}
    Assume $\eps_k \in (0,1/2n]$. For all $\beta, \gamma, \delta, \eta > 0$ we have:
    \begin{align*}
        &\Exp{i_k \sim p_k}{\sqnorm{\frac{1}{n\pkik}\nabla f_{i_k}(x_k)}}
        \leq
        D_2 \frac{2L}{n\eps_k} \left[F(x_k) - F^*\right] + \\
        &D_1\frac{1}{n^2\eps_k} \sum_{i=1}^{n} \sqnorm{g_k^i - \nabla f_i(x^*)} + 
        D_3 (1 + 2n \varepsilon_k)\sigma_{*}^2
    \end{align*}
    where:
    \begin{align*}
        D_1 &\coloneqq (1 + \beta^{-1} + \delta) + 
        (1 + \gamma^{-1} + \delta^{-1}) (1 + \eta) \\
        D_2 &\coloneqq (1 + \beta + \gamma) \\
        D_3 &\coloneqq (1 + \gamma^{-1} + \delta^{-1})(1 + \eta^{-1})
    \end{align*}
    \label{second_moment}
\end{lemma}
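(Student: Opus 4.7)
The three terms on the right-hand side are in natural bijection with a three-way split of $\nabla f_i(x_k)$, so the plan is to start from
\[
\nabla f_i(x_k) = \bigl(\nabla f_i(x_k) - \nabla f_i(x^*)\bigr) + \bigl(\nabla f_i(x^*) - g_k^i\bigr) + g_k^i
\]
and invoke the three-term Young-type inequality
\[
\sqnorm{a+b+c} \leq (1+\beta+\gamma)\sqnorm{a} + (1+\beta^{-1}+\delta)\sqnorm{b} + (1+\gamma^{-1}+\delta^{-1})\sqnorm{c},
\]
which follows by expanding the square and applying $2\inp{u}{v}\leq\mu\sqnorm{u}+\mu^{-1}\sqnorm{v}$ to each of the three cross terms. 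Weighting by $1/(n^2 p_k^i)$ and summing over $i$, the left-hand side is exactly $\sigma^2(x_k,p_k)$, and the right-hand side splits into three analogously weighted sums whose prefactors already account for the $\beta,\gamma,\delta$ portions of $D_1$, $D_2$, $D_3$.

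For the first two pieces I would use the crude bound $1/p_k^i \leq 1/\eps_k$ coming from $p_k \in \Delta(\eps_k)$. Combining this with the standard smoothness-plus-convexity consequence $\sqnorm{\nabla f_i(x_k) - \nabla f_i(x^*)} \leq 2L[f_i(x_k) - f_i(x^*) - \inp{\nabla f_i(x^*)}{x_k - x^*}]$ and summing, using $\nabla F(x^*)=0$, the first sum collapses to the desired $D_2 \cdot \frac{2L}{n\eps_k}[F(x_k)-F^*]$ term, while the second sum directly contributes the $(1+\beta^{-1}+\delta)$ half of the coefficient of $\frac{1}{n^2\eps_k}\sum_i\sqnorm{g_k^i-\nabla f_i(x^*)}$ inside $D_1$.

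The decisive step, and the main obstacle, is the third sum $\frac{1}{n^2}\sum_i\frac{1}{p_k^i}\sqnorm{g_k^i}$, which must simultaneously generate the remaining $(1+\gamma^{-1}+\delta^{-1})(1+\eta)$ part of $D_1$ and the full $D_3(1+2n\eps_k)\sigma_*^2$ contribution. The key idea is to exploit the defining property of $p_k$: since $p_k$ minimises $\frac{1}{n^2}\sum_i\frac{1}{p^i}\sqnorm{g_k^i}$ over $\Delta(\eps_k)$, I may replace $p_k$ by any comparison $q\in\Delta(\eps_k)$ at no cost. I would then apply one more Young split $\sqnorm{g_k^i} \leq (1+\eta)\sqnorm{g_k^i-\nabla f_i(x^*)} + (1+\eta^{-1})\sqnorm{\nabla f_i(x^*)}$, bound the first part using $1/q^i \leq 1/\eps_k$, and for the second part choose $q$ to be the minimiser over $\Delta(\eps_k)$ of $\sum_i\frac{1}{p^i}\sqnorm{\nabla f_i(x^*)}$, so that Proposition~\ref{subopt} (whose hypothesis $\eps_k \leq 1/(2n)$ is exactly the standing assumption of the lemma) bounds it by $(1+2n\eps_k)\bigl(\sum_i\norm{\nabla f_i(x^*)}\bigr)^2 = n^2(1+2n\eps_k)\sigma_*^2$. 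Assembling the three contributions yields the claimed bound with the advertised constants $D_1,D_2,D_3$; the subtle point is precisely this coupling of $p_k$'s optimality with a fictitious comparison distribution tailored to $\nabla f_i(x^*)$, since without it the $\sigma_*^2$ dependence cannot be extracted from the table norms $\|g_k^i\|$.
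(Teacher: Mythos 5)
Your proposal is correct: the three-way split of $\nabla f_i(x_k)$, the three-parameter Young inequality, and the $1/p_k^i \le 1/\eps_k$ bounds on the first two pieces are exactly what the paper does, and your accounting of the constants $D_1, D_2, D_3$ checks out. The only place you deviate is the third term, and your variant is a legitimate (arguably cleaner) alternative. The paper first invokes the optimality of $p_k$ together with Proposition~\ref{subopt} applied to $a_i = \norm{g_k^i}$, obtaining $(1+2n\eps_k)\bigl(\sum_i \norm{g_k^i}\bigr)^2$, and only then splits via the triangle inequality, Young with parameter $\eta$, and Cauchy--Schwarz, finishing with $(1+2n\eps_k)n \le 2n \le 1/\eps_k$ to recover the $1/\eps_k$ prefactor on the $\sum_i\sqnorm{g_k^i-\nabla f_i(x^*)}$ contribution. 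You instead pass from $p_k$ to an arbitrary comparison $q \in \Delta(\eps_k)$ by optimality, do the $\eta$-Young split per index inside the weighted sum, bound the $g_k^i - \nabla f_i(x^*)$ piece by $q^i \ge \eps_k$, and apply Proposition~\ref{subopt} to $a_i = \norm{\nabla f_i(x^*)}$ via a comparison distribution tailored to the optimum. This skips the Cauchy--Schwarz step and the $(1+2n\eps_k)n \le 1/\eps_k$ conversion while landing on the same constants; both routes rest on the same two ingredients (optimality of $p_k$ over $\Delta(\eps_k)$ and Proposition~\ref{subopt}), so the difference is one of bookkeeping rather than substance.
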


\subsection{Main results}
To study the convergence of SRG, we use the following Lyapunov
function, which is the same (up to constants) as the one used to study
the convergence of SAGA in \cite{hofmann_variance_2015,defazio_simple_2016}.
\begin{align*}
    T^{k} &= T(x_k, (g_k^i)_{i=1}^{n}) \\
    &\coloneqq \frac{\alpha_k}{n\varepsilon_k} \frac{a}{L} 
    \sum_{i=1}^{n} \sqnorm{g_k^i - \nabla f_i(x^{*})}
    + \sqnorm{x_k - x^{*}}
\end{align*}
for the constant $a = 0.673$ that we set during the analysis.
Our main result is the following bound on the evolution
of $T^k$ along the steps of SRG which uses Lemmas 
\ref{tracking} and \ref{second_moment}.
All the proofs of this section can be found in
the Appendix.
\begin{theorem} 
    Suppose that Assumptions 
    \ref{strongly-convex}, \ref{convex} and \ref{smooth} hold,
    and that $(x_k, (g_k^i)_{i=1}^{n})$ evolves 
    according to Algorithm \ref{SRG}.
    Further, assume that for all \(k \in \mathbb{N}\):
    \begin{enumerate*}[label=(\roman*)]
        \item $\alpha_k/\eps_k$ is constant.
        \item $\eps_k \in (0, 1/2n]$.
        \item \(\alpha_k \leq n \eps_k/ 20L\).
    \end{enumerate*}
    Then for all \(k \in \mathbb{N}\):
    \begin{align*}
        \E{T^{k+1}} \leq (1-\alpha_k \zeta)\E{T^k} + (1 + 2n\varepsilon_k)2\alpha_k^2 \sigma_{*}^2
    \end{align*}
    where:
    \begin{align*}
        \zeta \coloneqq \min{\left\{L/n, \mu\right\}}
    \end{align*}
    \label{main}
\end{theorem}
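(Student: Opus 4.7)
The plan is a one-step Lyapunov analysis blending an SGD-style descent bound on $\sqnorm{x_{k+1}-\xstar}$ with the contraction bound on the gradient table supplied by Lemma \ref{tracking}.

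First I would expand $\sqnorm{x_{k+1}-\xstar}$ via the SRG update and take conditional expectation over $(i_k,b_k)$. Unbiasedness of the importance-sampling estimator turns the cross term into $-2\alpha_k\inp{\nabla F(x_k)}{x_k-\xstar}$, which strong convexity bounds above by $-2\alpha_k[F(x_k)-F(\xstar)]-\alpha_k\mu\sqnorm{x_k-\xstar}$. The remaining second-moment term is controlled by Lemma \ref{second_moment}, splitting the noise into pieces proportional to $[F(x_k)-F(\xstar)]$, $\sum_i\sqnorm{g_k^i-\nabla f_i(\xstar)}$, and $(1+2n\eps_k)\sigma_*^2$ with constants $D_1,D_2,D_3$.

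Next I would apply Lemma \ref{tracking} and multiply it by the Lyapunov prefactor $\frac{\alpha_k}{n\eps_k}\frac{a}{L}$, which equals $\frac{\alpha_{k+1}}{n\eps_{k+1}}\frac{a}{L}$ by assumption~(i). Adding the two bounds yields an upper bound on $\E{T^{k+1}}$ that is a linear combination of $\sqnorm{x_k-\xstar}$, $[F(x_k)-F(\xstar)]$ and $\sum_i\sqnorm{g_k^i-\nabla f_i(\xstar)}$, plus a residual proportional to $(1+2n\eps_k)\alpha_k^2\sigma_*^2$. The goal is to show each coefficient is small enough to match the desired contraction $(1-\alpha_k\zeta)$ on $T^k$.

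The remaining step is bookkeeping with $a=0.673$ and a careful choice of the Young's-inequality parameters $\beta,\gamma,\delta,\eta$ in Lemma \ref{second_moment}. Concretely I would check: (a) the coefficient $(-2+2a)\alpha_k + \frac{2D_2 L\alpha_k^2}{n\eps_k}$ of $[F(x_k)-F(\xstar)]$ is $\leq 0$ so the term drops by $F(x_k)\geq F(\xstar)$, which amounts to $\alpha_k\leq \frac{n\eps_k(1-a)}{D_2 L}$; (b) the coefficient of the table term is at most $(1-\alpha_k\zeta)\frac{\alpha_k a}{Ln\eps_k}$, which after cancellation reduces to $\alpha_k(\zeta+\frac{D_1 L}{na})\leq\eps_k$ and is secured by $\alpha_k\leq n\eps_k/(20L)$ together with $\zeta\leq L/n$; and (c) the coefficient of $\sqnorm{x_k-\xstar}$ is $1-\alpha_k\mu\leq 1-\alpha_k\zeta$ since $\zeta\leq\mu$. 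The hard part will be picking $\beta,\gamma,\delta,\eta$ so that $D_1$, $D_2$, and $D_3$ simultaneously fit in the windows implied by (a)--(c) and also deliver the stated residual coefficient $2$ on the $\sigma_*^2$ term; I expect these three constraints to be tight and to be precisely what pins down the numerical value $a=0.673$ and the step-size ceiling $n\eps_k/(20L)$.
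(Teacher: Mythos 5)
Your proposal follows essentially the same route as the paper's proof: the same Lyapunov one-step expansion using unbiasedness and strong convexity, the same applications of Lemmas \ref{tracking} and \ref{second_moment}, and the same three coefficient checks, with the only cosmetic difference being that you verify the table-term contraction directly from $\alpha_k \leq n\eps_k/(20L)$ rather than via the paper's intermediate constraint $\tfrac{D_2}{1-a}-\tfrac{D_1}{a}\geq 1$. The numerical search over $(a,\beta,\gamma,\delta,\eta)$ you defer is exactly what the paper does (it solves the corresponding constrained optimization to get $a=0.673$, $D_3\leq 2$, and the $1/20$ ceiling), so the plan is sound and complete in outline.
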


The above theorem leads to the following convergence rate for SRG
with a constant step size.
\begin{corollary}
    Under the assumptions of Theorem \ref{main}, and using
    a constant lower bound 
    \(\eps_k = \eps \in (0, 1/2n]\)
    and a constant step size 
    \(\alpha_k = \alpha \leq n\eps/20L\),
    we have for any $k \in \mathbb{N}$:
    \begin{equation*}
        \E{T^k} \leq \left(1- \alpha \zeta\right)^{k} T^0 + (1+2n\varepsilon) \frac{2\alpha \sigma_*^2}{\zeta}
    \end{equation*}
    \label{constant}
\end{corollary}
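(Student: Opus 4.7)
The plan is to unroll the one-step bound from Theorem \ref{main}. With constant $\alpha_k = \alpha$ and constant $\eps_k = \eps$ the hypotheses of Theorem \ref{main} are satisfied at every iteration, so it yields the recursion
\begin{equation*}
    \E{T^{k+1}} \leq (1-\alpha\zeta)\E{T^{k}} + (1+2n\eps)\, 2\alpha^{2}\sigma_{*}^{2}.
\end{equation*}
Applying this recursion $k$ times, starting from $T^{0}$, produces
\begin{equation*}
    \E{T^{k}} \leq (1-\alpha\zeta)^{k} T^{0} + (1+2n\eps)\, 2\alpha^{2}\sigma_{*}^{2} \sum_{j=0}^{k-1}(1-\alpha\zeta)^{j}.
\end{equation*}

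The next step is to bound the finite geometric sum by its infinite counterpart. For this to be legitimate, I need $0 < \alpha\zeta < 1$. This follows from the standing assumptions: since $\zeta = \min\{L/n, \mu\}$, we have $\alpha \zeta \leq \alpha L/n$, and the hypothesis $\alpha \leq n\eps/(20L)$ together with $\eps \leq 1/(2n)$ gives $\alpha \zeta \leq \eps/20 \leq 1/(40n) < 1$. Consequently
\begin{equation*}
    \sum_{j=0}^{k-1}(1-\alpha\zeta)^{j} \leq \sum_{j=0}^{\infty}(1-\alpha\zeta)^{j} = \frac{1}{\alpha\zeta},
\end{equation*}
and plugging this bound back in yields
\begin{equation*}
    \E{T^{k}} \leq (1-\alpha\zeta)^{k} T^{0} + (1+2n\eps)\,\frac{2\alpha\sigma_{*}^{2}}{\zeta},
\end{equation*}
which is exactly the claimed bound.

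There is no real obstacle here; the corollary is a direct unrolling of the geometric recursion provided by Theorem \ref{main}, and the only point requiring care is checking $\alpha\zeta \in (0,1)$ so the geometric series converges, which is immediate from the step-size hypothesis.
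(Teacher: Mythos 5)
Your proof is correct and follows essentially the same route as the paper's: unroll the recursion from Theorem \ref{main} and bound the finite geometric sum by the infinite series $\sum_{j=0}^{\infty}(1-\alpha\zeta)^{j} = 1/(\alpha\zeta)$. Your explicit verification that $\alpha\zeta \in (0,1)$ is a small addition the paper omits, but it does not change the argument.
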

Comparing this result with the standard convergence result
of SGD, for example Theorem 3.1 in \cite{gower_sgd_2019},
we see that they are of similar form. The advantage of this
result is the dependence on $\sigma^2_{*}$ instead 
of $\sigma^2$. This allows SRG to extend
the range of accuracies on which SGD converges linearly
on the one hand, and reduces the leading factor
in the complexity of SGD for high accuracies on the other.

For decreasing step sizes, we have the following result,
which again is similar to Theorem 3.2 in \cite{gower_sgd_2019},
but with $\sigma_*^2$ replacing $\sigma^2$.
\begin{corollary}
    Under the assumptions of Theorem \ref{main},
    and using the step sizes and lower bounds:
    \begin{align*}
        \alpha_k &\coloneqq 
        \frac{2(k + k_0) + 1}{\left[c + (k+k_0)(k+k_0+2)\right]\zeta} \\
        \eps_k &\coloneqq \frac{20L\alpha_k}{n}
    \end{align*}
    where:
    \begin{align*}
        k_0 &\coloneqq \frac{80L}{\zeta} - 2 \\
        c &\coloneqq \frac{40L}{\zeta}
    \end{align*}
    Then:
    \begin{equation*}
        \E{T^k} \leq O\left(\frac{T^{0} + \sigma_*^2 \log{k}}{k^2}\right) + O\left(\frac{\sigma_*^2}{k}\right)
    \end{equation*}
    \label{decreasing}
\end{corollary}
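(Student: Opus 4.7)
The plan is to unroll the recursion from Theorem \ref{main} using a weighted telescoping trick in the style of \cite{gower_sgd_2019}. With the specified choice $\eps_k = 20L\alpha_k/n$ (which by construction meets condition (iii) of Theorem \ref{main} with equality and condition (i) with $\alpha_k/\eps_k = n/(20L)$), the factor $(1+2n\eps_k)$ becomes $1+40L\alpha_k$, so the one-step bound reads
\begin{equation*}
\E{T^{k+1}} \leq (1-\alpha_k\zeta)\E{T^k} + 2\alpha_k^2\sigma_*^2 + 80L\alpha_k^3 \sigma_*^2.
\end{equation*}
The specific values of $k_0$ and $c$ are engineered so that $\alpha_0 = 1/(40L)$ exactly, hence $\eps_0 = 1/(2n)$; combined with the monotonicity of $\alpha_k$ this guarantees condition (ii) at every $k$.

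The central computational observation is that $A_k \coloneqq c + (k+k_0)(k+k_0+2)$ satisfies the identity
\begin{equation*}
A_k(1-\alpha_k\zeta) = A_k - (2(k+k_0)+1) = c + (k+k_0-1)(k+k_0+1) = A_{k-1},
\end{equation*}
using $A_k\alpha_k\zeta = 2(k+k_0)+1$ by definition of $\alpha_k$. Multiplying the recursion through by $A_k$ therefore produces a perfectly telescoping left-hand side, and summing from $j=0$ to $k-1$ yields
\begin{equation*}
A_{k-1}\E{T^k} \leq A_{-1}T^0 + 2\sigma_*^2 \sum_{j=0}^{k-1} A_j \alpha_j^2 + 80L\sigma_*^2 \sum_{j=0}^{k-1} A_j \alpha_j^3.
\end{equation*}

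To finish, substitute $\alpha_j = (2(j+k_0)+1)/(A_j\zeta)$ back into the two sums. The first has summand $A_j\alpha_j^2 = (2(j+k_0)+1)^2/(A_j\zeta^2) = \Theta(1/\zeta^2)$, contributing $O(k/\zeta^2)$. The second has summand $A_j\alpha_j^3 = (2(j+k_0)+1)^3/(A_j^2\zeta^3) = \Theta(1/((j+k_0)\zeta^3))$, contributing $O(\log k/\zeta^3)$. Since $A_{k-1} = \Theta(k^2)$, dividing through produces the claimed bound $O((T^0+\sigma_*^2\log k)/k^2) + O(\sigma_*^2/k)$.

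The only ingredient beyond the standard SGD analysis is the cubic noise term $80L\alpha_k^3\sigma_*^2$, inherited from the $(1+2n\eps_k)$ factor in Theorem \ref{main}: it is precisely this term that produces the $\sigma_*^2 \log k/k^2$ piece of the bound, whereas the quadratic term $2\alpha_k^2\sigma_*^2$ gives the familiar $\sigma_*^2/k$ rate. The main conceptual step is guessing the quadratic weighting $A_k$ that matches $1-\alpha_k\zeta$ exactly; once identified, the remaining estimates are routine asymptotics of rational sums.
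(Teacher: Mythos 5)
Your proof is correct and follows essentially the same route as the paper: verify the step-size conditions of Theorem \ref{main} (noting $\alpha_0 = 1/40L$ so $\eps_0 = 1/2n$), multiply the recursion by the weight $A_k = c+(k+k_0)(k+k_0+2)$ chosen so that $A_k(1-\alpha_k\zeta)=A_{k-1}$, telescope, and bound the resulting sums, with the $(1+2n\eps_k)$ factor producing the $\sigma_*^2\log k/k^2$ term. The only cosmetic difference is that you split the noise into quadratic and cubic pieces before weighting, whereas the paper weights first and then bounds $\sum_t \eps_t$ by an integral; the computations are equivalent.
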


\begin{figure*}
   \centering
   \includegraphics[width=1.0\textwidth]{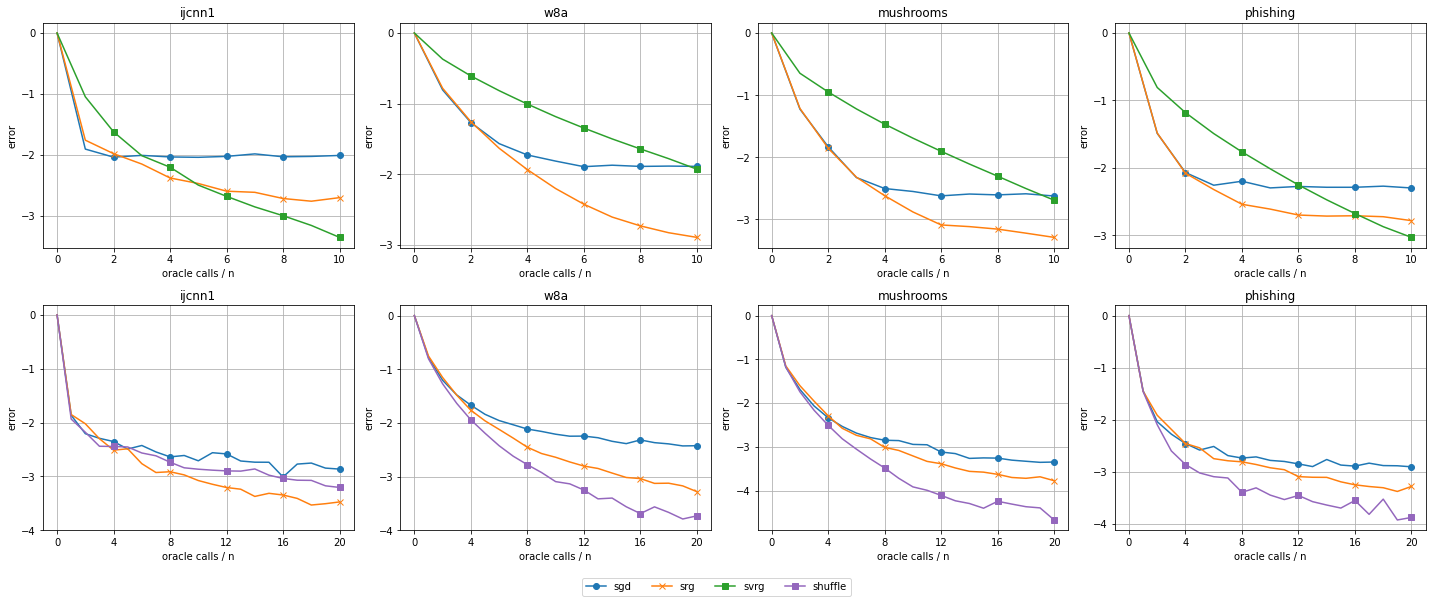}
   \caption{Comparison of the evolution of the relative error for different optimizers on $\ell_2$-regularized
   logistic regression problems using the datasets \emph{ijcnn1, w8a, mushrooms} and \emph{phishing}.
   The top row compares SGD (blue), SRG (orange), and SVRG (green), all with constant step size.
   The bottom row compares SGD (blue), SRG (orange), and SGD with shuffling (purple), all using
   decreasing step sizes.}
   \label{empirical}
\end{figure*}

\section{Experiments}
\label{experiments}
In this section, we experiment with SRG
on $\ell_2$-regularized logistic regression problems.
In this case, the functions $f_i$ are given by:
\begin{equation*}
    f_i(x) \coloneqq \log{(1 + \exp{(-y_i a_i^T x)})} + \frac{\mu}{2}\sqnorm{x}
\end{equation*}
where $y_i \in \{0, 1\}$ is the label of data point $a_i \in \Rd$. It is not hard
to show that each $f_i$ is convex and $L_i = 0.25\sqnorm{a_i} + \mu$ smooth.
Their average $F$ is also $\mu$-strongly convex.

We experiment with four datasets from LIBSVM \cite{chang_libsvm_2011}:
\emph{ijcnn1, w8a, mushrooms,} and \emph{phishing}.
We start by normalizing the data points $(a_i)_{i=1}^{n}$ so that $\norm{a_i} = 1$ for all $i \in [n]$.
This makes all the $f_i$ $L$-smooth with $L = 0.25 +\mu$. As is standard for regularized logistic 
regression, we take $\mu = \frac{1}{n}$.
We conduct two sets of experiments: one set using a constant step size, and another using
decreasing step sizes. In both cases we evaluate the performance of the algorithms
by tracking the relative error $\frac{\sqnorm{x_k - x^{*}}}{\sqnorm{x_0 - x^{*}}}$.
For all experiments, we sample the indices without replacement using a batch size of $128$.
Finally, we use the same step size across all algorithms for each dataset
to fairly compare their performance. We give more details on the step sizes we
use in the supplementary material.

The results of the experiments with constant step-size can be seen on the top row of
Figure \ref{empirical}. We compare SRG with SGD on the one hand and SVRG on the other.
We observe that SRG consistently outperforms SGD, reaching solutions that are about an order
of magnitude more accurate. We also notice that SRG extends the range of accuracies
on which SGD dominates SVRG. Eventually, SVRG outperforms both SGD and SRG, but our results show that SRG allows 
reaching solutions of moderate accuracy more quickly.

The bottom row of Figure \ref{empirical} shows the results of the experiments where we use decreasing
step-sizes. Here we compare SRG with SGD as well as SGD with random shuffling. 
SGD with random shuffling has attracted a lot of attention recently, and many analyses
have been developed that show its superior performance, see for example \cite{ahn_sgd_2020}.
Our results confirm this as SGD with shuffling outperforms both SGD and SRG. We still see however
that SRG converges slightly faster than SGD, as our analysis suggests.

While the empirical results we present here are positive but not impressive, we would like to point out
that the improvement of SRG over SGD depends directly on the ratio $r = \sigma^2/\sigma^2_{*}$. In particular,
if $\sigma_{*}^2 = \sigma^2$, then we should not expect any improvement. For the experiments in
Figure \ref{empirical}, $r$ is in the range $[5,10]$. One can construct artificial examples
in which this ratio is very large, which we do in the supplementary material, leading to significant improvements.
It is however still unclear how often such cases are encountered in practice. We hope
to explore this question more in future work.

\newpage
\bibliography{srg}
\bibliographystyle{icml2021}

\onecolumn
\appendix

\icmltitle{Supplementary material}
\section{Standard results}
\label{stand_results}
\begin{lemma}
    Under Assumptions \ref{convex} and \ref{smooth},
    we have:
    \begin{equation*}
        \frac{1}{n}\sum_{i=1}^{n} \sqnorm{\nabla f_i(x) - \nabla f_i(\xstar)}
        \leq 2 L \left[F(x) - F(\xstar)\right]
    \end{equation*}
    \label{grad_norm_bound}
\end{lemma}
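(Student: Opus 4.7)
The plan is to apply, to each component function $f_i$ individually, the standard consequence of convexity together with $L$-smoothness (often called co-coercivity in its descent-lemma form). Concretely, for any convex $L$-smooth function $h$ and any two points $x, y \in \Rd$, one has
\begin{equation*}
    h(x) \geq h(y) + \inp{\nabla h(y)}{x - y} + \frac{1}{2L}\sqnorm{\nabla h(x) - \nabla h(y)}.
\end{equation*}
I would apply this with $h = f_i$, $y = \xstar$, and arbitrary $x$, then rearrange to isolate the squared gradient difference:
\begin{equation*}
    \frac{1}{2L}\sqnorm{\nabla f_i(x) - \nabla f_i(\xstar)} \leq f_i(x) - f_i(\xstar) - \inp{\nabla f_i(\xstar)}{x - \xstar}.
\end{equation*}

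Next I would average these inequalities over $i = 1, \dotsc, n$. The left side becomes exactly $\frac{1}{2Ln}\sum_i \sqnorm{\nabla f_i(x) - \nabla f_i(\xstar)}$, while the right side collapses to
\begin{equation*}
    F(x) - F(\xstar) - \inp{\nabla F(\xstar)}{x - \xstar},
\end{equation*}
using the definition $F = \frac{1}{n}\sum_i f_i$ and linearity of the gradient. Since $\xstar$ is the minimizer of $F$ (which exists and is unique under the strong-convexity assumption, and in particular satisfies the first-order stationarity condition under the convex-plus-smooth assumptions in play), the cross term $\inp{\nabla F(\xstar)}{x - \xstar}$ vanishes. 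Multiplying through by $2L$ yields the claimed inequality.

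There is no real obstacle here; the one ingredient that deserves attention is the co-coercivity bound itself, which is standard and follows, for instance, from applying the descent lemma to the convex function $\tfrac{L}{2}\sqnorm{\cdot} - f_i(\cdot)$ (which is convex exactly because $f_i$ is $L$-smooth and convex) or equivalently by minimizing the smoothness upper bound $f_i(z) \leq f_i(x) + \inp{\nabla f_i(x)}{z-x} + \tfrac{L}{2}\sqnorm{z-x}$ over $z$ and comparing with the convexity lower bound at $\xstar$. The key structural observation making the proof work is that averaging converts the individual offsets $\nabla f_i(\xstar)$, which are nonzero in general, into $\nabla F(\xstar) = 0$.
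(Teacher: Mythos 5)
Your proof is correct and is essentially identical to the paper's: both apply the convexity-plus-$L$-smoothness inequality (Nesterov's Theorem 2.1.5) to each $f_i$ anchored at $\xstar$, rearrange to isolate $\frac{1}{2L}\sqnorm{\nabla f_i(x) - \nabla f_i(\xstar)}$, average over $i$, and use $\nabla F(\xstar) = 0$ to kill the cross term. No differences worth noting.
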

\begin{proof}
    By Assumptions \ref{convex} and \ref{smooth},
    and Theorem 2.1.5 in \cite{nesterov_introductory_2004}
    we have for all $i \in [n]$:
    \begin{equation*}
        f_i(x) \leq f_i(\xstar) + \inp{\nabla f_i(\xstar)}{x - \xstar}
        + \frac{1}{2L}\sqnorm{\nabla f_i(x) - \nabla f_i(\xstar)}
    \end{equation*}
    Rearranging gives:
    \begin{equation*}
        \sqnorm{\nabla f_i(x) - \nabla f_i(\xstar)}
        \leq 2L\left[f_i(x) - f_i(\xstar) - 
        \inp{\nabla f_i(\xstar)}{x - \xstar}\right]
    \end{equation*}
    Averaging over $i \in [n]$ and noticing that
    $\frac{1}{n}\sum_{i=1}^{n}\nabla f_i(\xstar) = \nabla F(\xstar) = 0$, we
    get the result.
\end{proof}

\begin{proposition} (Young's inequality, Peter-Paul inequality)
    Let $a, b, c \in \Rd$. Then for all $\beta, \gamma, \delta > 0$
    we have:
    \begin{align*}
        \sqnorm{a + b} &\leq (1 + \beta) \sqnorm{a} + 
        (1 + \beta^{-1}) \sqnorm{b} \\
        \sqnorm{a + b + c} &\leq
        (1 + \beta + \gamma) \sqnorm{a} +
        (1 + \beta^{-1} + \delta) \sqnorm{b} + 
        (1 + \gamma^{-1} + \delta^{-1}) \sqnorm{c}
    \end{align*}
    \label{youngs_inequality}
\end{proposition}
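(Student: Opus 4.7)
The plan is to prove both inequalities by expanding the squared norm and bounding each inner-product cross term by a weighted AM-GM step, which itself follows from the trivial fact that $\|\sqrt{\tau}a - (1/\sqrt{\tau})b\|_2^2 \geq 0$, giving $2\langle a, b\rangle \leq \tau\|a\|_2^2 + \tau^{-1}\|b\|_2^2$ for any $\tau > 0$. This is entirely elementary; the only content is in assigning the three free parameters $\beta,\gamma,\delta$ to the correct cross terms so that the stated constants emerge exactly.

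For the first inequality, I would write
\[
    \sqnorm{a+b} = \sqnorm{a} + 2\inp{a}{b} + \sqnorm{b},
\]
apply the weighted AM-GM bound $2\inp{a}{b} \leq \beta \sqnorm{a} + \beta^{-1}\sqnorm{b}$ with parameter $\beta > 0$, and collect terms to obtain $(1+\beta)\sqnorm{a} + (1 + \beta^{-1})\sqnorm{b}$.

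For the second inequality, I would similarly expand
\[
    \sqnorm{a+b+c} = \sqnorm{a} + \sqnorm{b} + \sqnorm{c} + 2\inp{a}{b} + 2\inp{a}{c} + 2\inp{b}{c},
\]
and then bound the three cross terms using three different parameters:
\[
    2\inp{a}{b} \leq \beta\sqnorm{a} + \beta^{-1}\sqnorm{b}, \quad
    2\inp{a}{c} \leq \gamma\sqnorm{a} + \gamma^{-1}\sqnorm{c}, \quad
    2\inp{b}{c} \leq \delta\sqnorm{b} + \delta^{-1}\sqnorm{c}.
\]
Summing these three bounds with the three squared-norm terms and grouping coefficients of $\sqnorm{a}$, $\sqnorm{b}$, $\sqnorm{c}$ separately yields exactly $(1+\beta+\gamma)\sqnorm{a} + (1 + \beta^{-1} + \delta)\sqnorm{b} + (1 + \gamma^{-1} + \delta^{-1})\sqnorm{c}$, as desired.

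There is no real obstacle here: the entire argument is the standard Peter–Paul / Young inequality applied once per pair. The only thing to be careful about is the bookkeeping for the second inequality — namely, pairing $(a,b)$ with $\beta$, $(a,c)$ with $\gamma$, and $(b,c)$ with $\delta$ is what makes the stated constants match. An alternative route is to apply the first inequality recursively (writing $a+b+c = a + (b+c)$ and then re-splitting $b+c$), but this gives products of parameters rather than sums, so the direct expand-and-bound approach is cleaner for matching the claimed form.
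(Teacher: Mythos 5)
Your proposal is correct and follows essentially the same route as the paper: expand the squared norm and bound each cross term by the weighted AM--GM inequality $2\inp{a}{b} \leq \tau\sqnorm{a} + \tau^{-1}\sqnorm{b}$ (the paper reduces to the scalar case via Cauchy--Schwarz first, while you apply the vector form directly, a negligible difference). Your explicit bookkeeping for the three-term case, pairing $(a,b)\leftrightarrow\beta$, $(a,c)\leftrightarrow\gamma$, $(b,c)\leftrightarrow\delta$, correctly reproduces the stated constants and fills in the second statement, which the paper leaves as ``a similar argument.''
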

\begin{proof}
    We prove the first statement. The second follows from a similar
    argument. We have for $x, y \in \R$:
    \begin{align*}
        & \beta x^2 - 2xy + \beta^{-1}y^2 
        = \left(\sqrt{\beta}x - \sqrt{\beta^{-1}}y\right)^2
        \geq 0
        \Rightarrow
        2xy \leq \beta x^2 + \beta^{-1} y^2 \\
        & \beta x^2 + 2xy + \beta^{-1}y^2 
        = \left(\sqrt{\beta}x + \sqrt{\beta^{-1}}y\right)^2
        \geq 0
        \Rightarrow
        -2xy \leq \beta x^2 + \beta^{-1} y^2
    \end{align*}
    Therefore:
    \begin{equation*}
        2\abs{xy} \leq \beta x^2 + \beta^{-1} y^2
    \end{equation*}
    Now by Cauchy-Schwarz inequality:
    \begin{align*}
        \sqnorm{a + b} &= \sqnorm{a} + 2\inp{a}{b} + \sqnorm{b} \\
        &\leq \sqnorm{a} + 2\abs{\inp{a}{b}} + \sqnorm{b} \\
        &\leq \sqnorm{a} + 2\norm{a}\norm{b} + \sqnorm{b} \\
        &\leq (1 + \beta) \sqnorm{a} + (1 + \beta^{-1}) \sqnorm{b}
    \end{align*}
\end{proof}

\section{Missing proofs}
\label{missing_proofs}
\subsection{Proof of Lemma \ref{tracking}}
\begin{proof}
    Taking expectation with respect to $(i_k, b_k)$ conditional
    on $(i_t, b_t)_{t=1}^{k-1}$ we have:
    \begin{align*}
        &\E{\sum_{i=1}^{n} \sqnorm{g_{k+1}^{i} - \nabla f_{i}(x^{*})}}\\
        &= \sum_{j=1}^{n} \mathbb{P}(i_k = j)
        \biggl[
            \mathbb{P}(b_k = 0 \mid i_k = j) \left(
                \sum_{i=1}^{n} \sqnorm{g_{k}^{i} - \nabla f_{i}(x^{*})}
                \right)
        \\
        &\quad +
        \mathbb{P}(b_k = 1 \mid i_k = j)
        \left(
            \sum_{\substack{i=1 \\ i \neq j}}^{n} 
            \sqnorm{g_k^i - \nabla f_i(x^{*})} + 
            \sqnorm{\nabla f_j(x_k) - \nabla f_j(x^{*})}
        \right)
        \biggl] \\
        &= \sum_{j=1}^{n} p_k^j 
        \left[
            \left(1 - \frac{\varepsilon_k}{p_k^j}\right)
            \sum_{i=1}^{n} \sqnorm{g_{k}^{i} - \nabla f_{i}(x^{*})}
            +
            \frac{\varepsilon_k}{p_k^j} \left(
                \sum_{\substack{i=1 \\ i \neq j}}^{n} 
                \sqnorm{g_k^i - \nabla f_i(x^{*})} + 
                \sqnorm{\nabla f_j(x_k) - \nabla f_j(x^{*})}
            \right)
        \right] \\
        &= \left(1 - \varepsilon_k\right) \sum_{i=1}^{n} 
        \sqnorm{g_{k}^{i} - \nabla f_{i}(x^{*})} + 
        \varepsilon_k \sum_{i=1}^{n} \sqnorm{\nabla f_i(x_k) - \nabla f_i(x^{*})} \\
        &\leq \left(1 - \varepsilon_k\right) \sum_{i=1}^{n} 
        \sqnorm{g_{k}^{i} - \nabla f_{i}(x^{*})} + 
        2 L n\varepsilon_k \left[F(x_k) - F(x^{*})\right]
    \end{align*}
    where the second and third lines follow from the update
    of Algorithm \ref{SRG}, and the last line follows from Lemma 
    \ref{grad_norm_bound}.
\end{proof}

\subsection{Proof of Lemma \ref{second_moment}}
\begin{proof}
    Taking expectation with respect to $(i_k, b_k)$ conditional
    on $(i_t, b_t)_{t=1}^{k-1}$ we have:
    \begin{equation*}
        \E{\sqnorm{\frac{1}{n\pkik} \nabla f_{i_k}(x_k)}}
        = \frac{1}{n^2} \sum_{i=1}^{n} \frac{1}{p_k^i} 
        \sqnorm{\nabla f_i(x_k)}
    \end{equation*}
    Now by Proposition \ref{youngs_inequality}:
    \begin{align*}
        \sum_{i=1}^{n} \frac{1}{p_k^i} \sqnorm{\nabla f_{i}(x_k)}
        &= 
        \sum_{i=1}^{n} \frac{1}{p_k^i} 
        \sqnorm{
            \nabla f_{i}(x_k) - \nabla f_{i}(x^{*}) + 
            \nabla f_{i}(x^{*}) - g_k^{i} +
            g_k^i
        } \\
        &\leq 
        (1+\beta+\gamma) 
        \sum_{i=1}^{n} \frac{1}{p_k^i} 
        \sqnorm{\nabla f_{i}(x_k) - \nabla f_{i}(x^{*})} \\
        &\quad +
        (1+\beta^{-1} + \delta) 
        \sum_{i=1}^{n} 
        \frac{1}{p_k^i}\sqnorm{g_k^i - \nabla f_{i}(x^{*})} \\
        &\quad + 
        (1 + \gamma^{-1} + \delta^{-1}) 
        \sum_{i=1}^{n} 
        \frac{1}{p_k^{i}} \sqnorm{g_k^i} \\
    \end{align*}
    Let us bound each of the three terms. The first can be bound
    using $p_k \geq \varepsilon_k$ and Lemma \ref{grad_norm_bound}:
    \begin{align*}
        \sum_{i=1}^{n} 
        \frac{1}{p_k^i} 
        \sqnorm{\nabla f_{i}(x_k) - \nabla f_{i}(x^{*})}
        &\leq
        \frac{1}{\varepsilon_k} 
        \sum_{i=1}^{n} 
        \sqnorm{\nabla f_i(x_k) - \nabla f_i(x^*)}
        \leq 
        \frac{2Ln}{\varepsilon_k}\left[F(x_k) - F(x^*)\right]
    \end{align*}
    The second is easily bound using $p_k \geq \varepsilon_k$:
    \begin{align*}
        \sum_{i=1}^{n} 
        \frac{1}{p_k^i}
        \sqnorm{g_k^i - \nabla f_i(x^*)}
        \leq 
        \frac{1}{\varepsilon_k}
        \sum_{i=1}^{n} 
        \sqnorm{g_k^i - \nabla f_i(x^*)}
    \end{align*}
    Finally, the third term can be bound as:
    \begin{align*}
        &\sum_{i=1}^{n} \frac{1}{p_k^i} \sqnorm{g_k^i} \\
        &\leq 
        (1 + 2n\varepsilon_k)
        \left( \sum_{i=1}^{n} \norm{g_k^i} \right)^2 \\
        &\leq
        (1 + 2n\varepsilon_k)
        \left( 
        \sum_{i=1}^{n}
        \norm{g_k^i - \nabla f_i(x^*)}
        +
        \sum_{i=1}^{n}
        \norm{\nabla f_i(x^{*})}
        \right)^2 \\
        &\leq
        (1+2n\varepsilon_k)
        (1+\eta)
        \left(\sum_{i=1}^{n} \norm{g_k^i - \nabla f_i(x_k)} \right)^2
        +
        (1+2n\varepsilon_k)
        (1 + \eta^{-1})
        \left(\sum_{i=1}^{n} \norm{\nabla f_i(x^*)} \right)^2 \\
        &\leq
        (1+2n\varepsilon_k)
        (1 + \eta) n
        \sum_{i=1}^{n} \sqnorm{g_k^i - \nabla f_i(x^*)}
        + 
        (1+2n\varepsilon_k)
        (1 + \eta^{-1})
        n^2 \sigma_{*}^2 \\
        &\leq
        2n (1+\eta) \sum_{i=1}^{n} \sqnorm{g_k^i - \nabla f_i(x^*)}
        +
        (1+2n \varepsilon_k)
        (1 + \eta^{-1})
        n^2 \sigma_{*}^2 \\
        &\leq
        (1+\eta) \frac{1}{\varepsilon_k} 
        \sum_{i=1}^{n} \sqnorm{g_k^i - \nabla f_i(x^*)}
        +
        (1+ \eta^{-1}) (1+2n \varepsilon_k) n^2\sigma_{*}^2
    \end{align*}
    Where the second line follows from Proposition \ref{subopt},
    the third from the triangle inequality,
    and the fourth by Proposition \ref{youngs_inequality}.
    The fifth line follows from the definition of $\sigma^2_*$ in
    section \ref{convergence} and an application of Cauchy-Schwarz
    inequality. Let $v$ be the vector whose $i^{th}$ component
    is $\norm{g_k^i - \nabla f_i(\xstar)}$ and let
    $1_n$ be the vector of ones. Then:
    \begin{equation*}
        \left(\sum_{i=1}^{n} \norm{g_k^i - \nabla f_i(\xstar)}\right)^2 
        = \abs{\inp{1_n}{v}}^2 \leq \sqnorm{1_n} \sqnorm{v}
        = n \sum_{i=1}^{n} \sqnorm{g_k^i - \nabla f_i(\xstar)}
    \end{equation*}
    Finally, lines six and seven follow from the 
    inequality $\varepsilon_k \leq 1/2n$.
    Combining the three bounds yields the result.
\end{proof}
\subsection{Proof of Theorem \ref{main}}
\begin{proof}
    Recall that we are studying the one-step evolution of the
    Lyapunov function:
    \begin{equation*}
        T^k = T(x_k, (g_k^i)_{i=1}^{n})
        \coloneqq \frac{\alpha_k}{n\varepsilon_k} \frac{a}{L} 
        \sum_{i=1}^{n} \sqnorm{g_k^i - \nabla f_i(x^{*})}
        + \sqnorm{x_k - x^{*}}
    \end{equation*}
    and that we are assuming
    \begin{enumerate*}[label=(\roman*)]
        \item $\alpha_k/\eps_k$ is constant.
        \item $\eps_k \in (0, 1/2n]$.
        \item \(\alpha_k \leq n \eps_k/ 20L\).
    \end{enumerate*}

    All the expectations in this proof are 
    with respect to $(i_k, b_k)$ and are
    conditional  on \((i_t, b_t)_{t=0}^{k-1}\).
    Since $\alpha_k/\eps_k$ is constant, Lemma \ref{tracking}
    immediately gives us a bound on the first term
    of $\E{T^{k+1}}$.


    The second term of $\E{T^{k+1}}$ expands as:
    \begin{align*}
        \E{\sqnorm{x_{k+1} - x^{*}}} &= 
        \E{\sqnorm{x_k - \alpha_k \frac{1}{n p_k^{i_k}} 
        \nabla f_{i_k}(x_k) - x^* }} \\
        &= \sqnorm{x_k - x^*} - 
        2\alpha_k \inp{\E{\frac{1}{np_k^{i_k}} 
        \nabla f_{i_k}(x_k)}}{x_k - x^*}
        +
        \alpha_k^2 \E{\sqnorm{\frac{1}{np_k^{i_k}} \nabla f_{i_k}(x_k)}} \\
        &= \sqnorm{x_k - x^*} - 2\alpha_k \inp{\nabla F(x_k)}{x_k - x^*}
        + \alpha_k^2 \E{\sqnorm{\frac{1}{np_k^{i_k}} \nabla f_{i_k}(x_k)}} \\
        &\leq (1 - \alpha_k \mu)\norm{x_k - x^*} - 
        2\alpha_k \left[F(x_k) - F(x^*)\right] + 
        \alpha_k^2 \E{\sqnorm{\frac{1}{np_k^{i_k}} \nabla f_{i_k}(x_k)}}
    \end{align*}
    where in the last line we use Assumption \ref{strongly-convex} 
    (strong-convexity of $F$).
    Since we are assuming $\eps_k \in (0, 1/2n]$,
    we can apply Lemma \ref{second_moment} to bound the last term above.
    Combining the resulting bound with the one on the first
    term we get:
    \begin{align*}
        \E{T^{k+1}} &\leq 
        \left(1 - \eps_k + \frac{D_1 \alpha_k L}{na}\right)
        \frac{\alpha_k}{n\eps_k}\frac{a}{L}\sum_{i=1}^{n}
        \sqnorm{g_k^i - \nabla f_i(x^*)}
        + \left(1 - \alpha_k \mu\right) \sqnorm{x_k - x^*} \\
        &\quad + D_3 (1 + 2n\varepsilon_k) \alpha_k^2 \sigma_*^2 \\
        &\quad + \frac{2\alpha_k L}{n\varepsilon_k}
        \left(D_2 \alpha_k + 
        \frac{an\varepsilon_k}{L}
        -
        \frac{n\varepsilon_k}{L}
        \right) \left[F(x_k) - F(x^*)\right]
    \end{align*}

    To ensure that the last parenthesis is not positive,
    we need:
    \begin{equation}
        \alpha_k \leq \frac{(1-a)}{D_2}\frac{n\eps_k}{L}
        \label{step_size_cond}
    \end{equation}
    Assuming $\alpha_k$ satisfies this condition,
    and replacing in the first parenthesis we get:
    \begin{equation*}
        1 - \eps_k + \frac{D_1\alpha_kL}{na}
        \leq 1 - \alpha_k \left(\frac{D_2}{1-a} - \frac{D_1}{a}\right) \frac{L}{n}
    \end{equation*}
    Now:
    \begin{align*}
        \min\left\{\left(\frac{D_2}{1-a} - \frac{D_1}{a}\right) \frac{L}{n}, \mu\right\}
        \leq \min\left\{\left(\frac{D_2}{1-a} - \frac{D_1}{a}\right), 1\right\} \zeta
    \end{align*}
    where:
    \begin{equation*}
        \zeta \coloneqq \min\left\{\frac{L}{n}, \mu\right\}
    \end{equation*}
    So that, assuming (\ref{step_size_cond}) holds, 
    our upper bound on $\E{T^{k+1}}$ is itself upper bounded by:
    \begin{align*}
        \E{T^{k+1}} \leq \left(1 - \alpha_k  \min\left\{\left(\frac{D_2}{1-a} - \frac{D_1}{a}\right), 1\right\} \zeta \right)
        T^{k} + D_3 (1 + 2n\eps_k) \alpha_k^2 \sigma_{*}^2
    \end{align*}

    It remains to choose the parameters $\beta, \gamma, \delta, \eta > 0$,
    and the parameter $a > 0$ so as to
    minimize this upper bound while maximizing the step size
    in (\ref{step_size_cond}).
    First however, note that we have the constraint
    $a < 1$ so that the step size can be allowed to be positive
    in (\ref{step_size_cond}). Furthermore, we choose
    to impose $D_3 = 2$ since this is also the leading
    factor in the analogous term in the analysis of SGD
    \cite{gower_sgd_2019}.
 
    These considerations lead us to the following 
    constrained optimization problem:
    \begin{align*}
        \max_{a, \beta, \gamma, \delta, \eta} \quad &\frac{(1-a)}{D_2} \\
        \text{subject to:} \quad & \frac{D_2}{1-a} - \frac{D_1}{a} \geq 1 \\
        & D_3 \leq 2 \\
        & 0 < a < 1 \\
        & \beta, \gamma, \delta, \eta > 0 
    \end{align*}
    which we solve numerically to find the feasible point:
    \begin{equation*}
        a = 0.673, \
        \beta = 1.028, \
        \gamma = 4.084, \ 
        \delta = 3.973, \ 
        \eta = 2.980
    \end{equation*}
    With this choice of parameters, we get:
    \begin{equation*}
        \E{T^{k+1}} \leq \left(1 - \alpha_k \zeta \right)
        T^{k} + (1 + 2n\eps_k) 2 \alpha_k^2 \sigma_{*}^2
    \end{equation*}
    under the condition:
    \begin{equation*}
        \alpha_k \leq \frac{1}{20}\frac{n\eps_k}{L}
    \end{equation*}
    which ensures that (\ref{step_size_cond}) holds.
\end{proof}
\subsection{Proof of Corollary \ref{constant}}
\begin{proof}
    Under the assumptions of the corollary,
    the conditions of Theorem \ref{main} are
    satisfied for all $k \in \mathbb{N}$.
    Starting from $\E{T^k}$ and
    repeatedly applying Theorem \ref{main} we get:
    \begin{align*}
        \E{T^{k}} &\leq (1- \alpha \zeta)^k T^{0} + 
        (1 + 2n\varepsilon)2\alpha^2\sigma_*^2
        \sum_{t=0}^{k-1} (1- \alpha \zeta)^{t} \\
        &\leq (1 - \alpha \zeta)^k T^{0} + 
        (1 + 2n\varepsilon)2\alpha^2\sigma_*^2
        \sum_{t=0}^{\infty} (1- \alpha \zeta)^{t} \\
        &= (1 - \alpha \rho)^k T^{0} + 
        (1 + 2n\varepsilon)\frac{2\alpha\sigma_*^2}{\zeta}
    \end{align*}
\end{proof}

\subsection{Proof of Corollary \ref{decreasing}}
\begin{proof}
    The choices of $\eps_k$, $\alpha_k$, and of the constants
    $c$ and $k_0$ are such that:
    \begin{enumerate*}[label=(\roman*)]
        \item $\alpha_k/\eps_k$ is constant.
        \item $\alpha_k \leq n\eps_k/20L$ for all $k \in \mathbb{N}$.
        \item $\alpha_k$, and therefore $\eps_k$, is decreasing for all $k \in \mathbb{N}$.
        \item $\eps_0 = \frac{1}{2n}$.
    \end{enumerate*}
    Therefore, the conditions of Theorem \ref{main} hold
    for all $k \in \mathbb{N}$.
    Fix \(k \in \mathbb{N}\). We have by Theorem \ref{main}
    for any \(t \in [k-1] \cup \{0\}\):
    \begin{align*}
        \E{T^{t+1}} &\leq 
        (1-\alpha_t\zeta) \E{T^t} + 
        (1 + 2n\varepsilon_t)2\alpha_t^2 \sigma_{*}^2 \\[1em]
        &= \frac{c + (t+k_0-1)(t+k_0+1)}{c + (t+k_0)(t+k_0+2)} \E{T^{t}}
        + (1 + 2n\varepsilon_t)2\alpha_t^2 \sigma_*^2
    \end{align*}
    multiplying both sides by \(\left[c + (t+k_0)(t+k_0+2)\right]\) and
    noticing that:
    \begin{equation*}
        \left[c + (t+k_0)(t+k_0+2)\right]\alpha_t^2 = 
        \frac{\left[2 (t + k_0) + 1\right]^2}
        {\left[c + (t+k_0)(t+k_0+2)\right]} \leq 4
    \end{equation*}
    we get:
    \begin{align*}
        \left[c + (t+k_0)(t+k_0+2)\right]\E{T^{t+1}} - 
        \left[c + (t+k_0-1)(t+k_0+1)\right]\E{T^{t}}
        \leq (1+2n\varepsilon_t)8\sigma_*^2
    \end{align*}
    summing the \(k\) inequalities for \(t \in [k-1] \cup \{0\}\) and noticing
    that the left side is a telescoping sum we obtain:
    \begin{align*}
        \left[c + (k+k_0)(k+k_0+2)\right]\E{T^{k}}
        -
        \left[c + (k_0 - 1)(k_0 + 1)\right] T^{0}
        \leq 
        8k\sigma_{*}^2 + 16n\sigma_{*}^2 \sum_{t=0}^{k-1}\varepsilon_t
    \end{align*}
    Let us now bound the last term:
    \begin{align*}
        16 n \sigma_{*}^2 \sum_{t=0}^{k-1} \varepsilon_t
        &= 320L\sigma_*^2\sum_{t=0}^{k-1} \alpha_t \\[1em]
        &= \frac{320L \sigma_*^2}{\zeta}
        \sum_{t=0}^{k-1} \frac{2 (t + k_0) + 1}{\left[c + (t+k_0)(t+k_0+2)\right]}\\[1em]
        &\leq \frac{320L \sigma_*^2}{\zeta} 
        \int_{-1}^{k-1} \frac{2(x + k_0) + 1}{\left[c + (x+k_0)(x+k_0+2)\right]} \, dx\\[1em]
        &\leq \frac{320L \sigma_*^2}{\zeta} 
        \int_{-1}^{k-1} \frac{2(x + k_0 + 1)}{\left[c + (x+k_0)(x+k_0+2)\right]} \, dx\\[1em]
        &= \frac{320L \sigma_*^2}{\zeta} 
        \log{\frac{\left[c + (k+k_0-1)(k+k_0+1)\right]}{\left[c + (k_0-1)(k_0+1)\right]}}
    \end{align*}
    where the first line follows from the definition of $\eps_t$,
    and the third line follows from the fact that the integrand is
    decreasing over the domain of integration.
    Using this bound and rearranging:
    \begin{align*}
        \E{T^k} &\leq 
        \frac{\left[c + (k_0-1)(k_0+1)\right]}
        {\left[c + (k+k_0)(k+k_0+2)\right]}
        T^{0}
        +
        \frac{8k\sigma_*^2}{\left[c + (k+k_0)(k+k_0+2)\right]}
        \\[1em]
        &\quad +
        \frac{320L \sigma_*^2}{\zeta} 
        \frac{1}{\left[c + (k+k_0)(k+k_0+2)\right]}
        \log{\frac{\left[c + (k+k_0-1)(k+k_0+1)\right]}{\left[c + (k_0-1)(k_0+1)\right]}}
        \\
        &= O\left(\frac{T^{0} + \sigma_*^2 \log{k}}{k^2}\right) + O\left(\frac{\sigma_*^2}{k}\right)
    \end{align*}
\end{proof}

\section{Details of experiments}
\label{exp_details}
In this section we give more details on the experiments
presented in section \ref{experiments}.

\paragraph{Batch size:} Our analysis is for the case where the batch size is equal to 1.
We would have therefore ideally liked to run the experiments
with a unit batch size. This is however extremely slow,
so we used instead a batch size of $m = 128$ sampled without
replacement.

\paragraph{Step sizes:} To make a fair comparison between the algorithms, we use
the same step size sequence across them for a given dataset.
Define:
\begin{align*}
    \mathcal{L} = \frac{n - m}{m(n-1)} L_{max} + \frac{n(m-1)}{m(n-1)} L
\end{align*}
where $L_{max} = \max_{i \in [n]} L_i$ where $L_i$ is the smoothness constant
of $f_i$, $L$ is the smoothness constant of $F$, and $m$ is the batch size.

For the constant step size experiments, we use SGD as a reference,
and use the maximum step size allowable in its analysis in
\cite{gower_sgd_2019}, which is given by:
\begin{equation*}
    \alpha = \frac{1}{2\mathcal{L}}
\end{equation*}

Similarly, for the decreasing step size experiments, 
we use the step sizes:
\begin{equation*}
    \alpha_k = \frac{2(k + k_0) + 1}
    {\left[c + (k+k_0)(k+k_0+2)\right]\mu}
\end{equation*}
where:
\begin{align*}
    k_0 &= \frac{4\mathcal{L}}{\mu} - 2 \\
    c &= \frac{2\mathcal{L}}{\mu}
\end{align*}

\paragraph{Algorithm specific parameters:}For SRG, we initialize $g_0^i = 0$ for all $i \in [n]$,
and we always update $g_k^{i_k}$, i.e. we do not sample
a Bernoulli to decide whether the update occurs or not,
as we believe that the added Bernoulli is only an artifact of the
analysis.
Furthermore, as the indices are sampled without replacement,
the naive importance sampling estimator is biased. 
We use the mini-batch estimator introduced in
\cite{el_hanchi_adaptive_2020} which corrects for this bias.
For the constant step size experiments, we set $\eps_k = \eps = 1/2n$, 
while for the ones using decreasing step sizes,
we set $\eps_k = \mathcal{L}\alpha_k/n$

For SVRG, we use its loopless version \cite{hofmann_variance_2015}
to avoid the stairlike behavior of the convergence plot
of SVRG. We use a full update probability of $q = m/n$ so that
the expected number of gradient evaluations per iteration
is three times that of SGD, the same ratio that we would expect
from running standard SVRG and SGD with a unit batch size.

For SGD with random shuffling, we sample a fresh permutation
at the beginning of each epoch.

\paragraph{Figure:} Figure \ref{empirical} shows the evolution of the log relative
error:
\begin{equation*}
    \log_{10}\left(\frac{\sqnorm{x_k - \xstar}}{\sqnorm{x_0 - \xstar}}\right)
\end{equation*}
For each algorithm and each dataset, we run the algorithm
ten times and plot the average of the results.

\section{Synthetic experiment}
\label{synth_exp}
In this section, we compare SRG and SGD on a synthetic dataset.
Our theoretical analysis suggests that the
improvement of SRG over SGD is proportional to the
ratio $r = \sigma^2/\sigma_*^2$.
Here we construct an artificial example in which this ratio 
is large ($r \approx 50$), compared to $r \in [5,10]$ for the experiments
in section \ref{experiments}.

We do this by generating a synthetic dataset as follows:
\begin{itemize}
    \item generate a $1000 \times 10$ features matrix $A$ randomly with $a_{ij} \sim \mathcal{N}(0,1)$.
    \item generate a weight vector $w$ randomly with $w_i \sim \mathcal{N}(0,1)$.
    \item generate the target values randomly as:
    \begin{equation*}
        y_i = w^T a_i + \eps_i
    \end{equation*} 
    where each $\eps_i$ is an independent 
    standard Cauchy random variable.
\end{itemize}
We then fit a linear regression model with
the mean squared error loss using SGD and SRG.
We use a batch size of $1$, and a constant step size
$\alpha = 1/2L$ for both. For SRG we use $\eps_k = \eps = 1/2n$,
and initialize $g_0^i = 0$.
The results are displayed
in Figure \ref{synthetic}.
We ran each algorithm a hundred times and plotted the average
of the results.
The left plot shows the evolution of the log relative error,
while the right one shows the evolution of the log of the
variance of the gradient estimator $\sigma^2(x_k, 1/n)$ and
$\sigma^2(x_k, p_k)$ respectively for SGD and SRG.
We see that SRG reaches solution that 
are approximately two orders of magnitude more accurate
than SGD, which is what we expect from the value of the ratio
$r\approx 50$. As pointed out in section \ref{experiments},
it is not clear how often such large values for the ratio $r$
are encountered in practice, but our experiments confirm that 
substantial gains can be achieved if it is large enough.

\begin{figure*}
    \centering
    \includegraphics[width=1.0\textwidth]{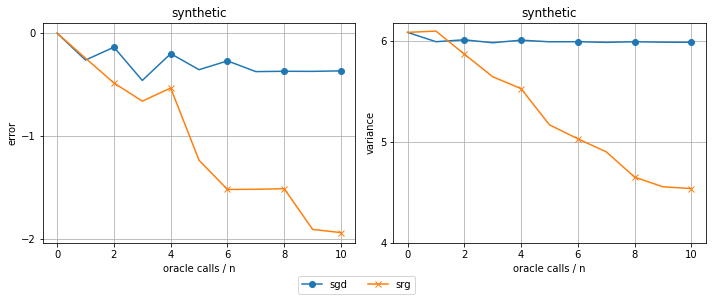}
    \caption{Comparison of the evolution of the relative error (left)
    and the variance of the gradient estimator (right) for SGD
    and SRG applied to the synthetic dataset described in 
    section \ref{synth_exp}.}
    \label{synthetic}
 \end{figure*}


\end{document}